\newcommand{\mbd}{{\mathbf{d}}}
\long\def\@savemarbox#1#2{\global\setbox#1\vtop{\hsize\marginparwidth
  \@parboxrestore\tiny\raggedright #2}}
\newcommand {\me}{\medskip}
\newcommand{\showcomments}{yes}
\newsavebox{\commentbox}
\newcommand{\fp}{{\mathfrak p}}
\newcommand{\fs}{{\mathfrak s}}
\newcommand{\acts}{\curvearrowright}
\newcommand{\nn}{{\mathcal N}}
\newcommand{\onn}{\overline{{\mathcal N}}}
\newcommand{\bb}{{\mathcal B}}
\newtheorem{thm}{Theorem}[section]
\newtheorem{lem}[thm]{Lemma}
\newtheorem{cor}[thm]{Corollary}
\newtheorem{prop}[thm]{Proposition}
\theoremstyle{definition}
\newtheorem{defn}[thm]{Definition}
\newtheorem{rem}[thm]{Remark}
\newtheorem{exmp}[thm]{Example}
\newtheorem{exmps}[thm]{Examples}
\newtheorem{notation}[thm]{Notation}
\newtheorem{quest}[thm]{Question}
\newcommand{\field}[1]{\mathbb{#1}}
\newcommand{\reals}{\ensuremath{\field{R}}}
\newcommand{\pdist}{{\mathrm{pdist}}}
\newcommand{\dist}{{\mathrm{dist}}}
\newcommand {\N}{\mathbb{N}} 
\newcommand {\R}{\mathbb{R}} 
\newcommand {\Q}{\mathbb{Q}} 
\newcommand {\C}{\mathbb{C}} 
\newcommand {\ww}{\mathcal{W}} 
\newcommand {\hh}{\mathcal{H}} 
\newcommand {\mm}{\mathcal{M}} 
\newcommand {\eff}{geodesically }
\newcommand {\effs}{geodesically}
\begin{document}

\title[Median geometry: spaces and groups]{Median geometry for spaces with measured walls and for groups}

\author[I.~Chatterji]{Indira Chatterji}
           \address{
Laboratoire Dieudonn\'e\\
Campus Valrose\\
06000 Nice\\
France.}
           \email{indira@unice.fr}

           \author[C.~Dru\c{t}u]{Cornelia Dru\c{t}u}
           \address{Mathematical Institute \\
 University of Oxford \\
Radcliffe Observatory Quarter\\
  Oxford, UK.}
           \email{Cornelia.Drutu@maths.ox.ac.uk}

\subjclass[2000]{{Primary 20F65; Secondary 46B04, 20F67, 22F50}} \keywords{median metric spaces, spaces with measured walls, {H}aagerup property, a-T-menability, property {(T)}}
\date{\today}

\begin{abstract}
We show that uniform lattices of isometries of products of real hyperbolic spaces act properly discontinuously and cocompactly on a median space. For lattices in products of at least two factors, this is the strongest degree of compatibility possible with median geometry. The result follows from an analysis of a quasification of median geometry that provides a geometric characterization of spaces at a finite Hausdorff distance from a median space. The case of complex hyperbolic metric spaces is different; we show that these spaces cannot be at finite Hausdorff distance from a median space. 
\end{abstract}

\maketitle

\tableofcontents

\section{Introduction.}
\textbf{1.a. Median spaces and spaces with walls.}\label{med}\quad A \emph{median} space is a metric space $(X,d)$ such that every triple of points $x_1,x_2,x_3\in X$ admits a unique \emph{median point}, i.e. a point $m\in X$ satisfying 
$$d(x_i,m)+d(m,x_j)=d(x_i,x_j), \; \hbox{for all }i,j\in \{ 1,2,3 \}, i\not=j.
$$ 
The map $X\times X\times X\to X,\; (x_1,x_2,x_3)\mapsto m,$ endows $X$ with a {\it ternary algebra structure}, called \emph{median algebra}. These were studied in \cite{VandeVel:book,BandeltHedlikova,Isbell,Sholander1,Sholander2}. The geometric study of median spaces was started in \cite{Roller:median,NicaMaster}, and more recently by Bowditch in \cite{Bow14,Bow16,Bow20}.

Examples of median metric spaces are trees, $\reals^n$ with the $\ell^1$ metric for any $n\geq 1$, and CAT(0) cube
complexes on which the Euclidean metric on cubes is replaced by the $\ell^1$ metric. According to Chepoi
and Gerasimov \cite{Chepoi:graphs,Gerasimov:semisplittings,Gerasimov:fixedpoint} the
class of $1$-skeleta of CAT(0) cube complexes coincides with the class of {\it median graphs}, which are simplicial graphs whose $0$-skeleton with the combinatorial distance is median. General median spaces can be thought of as non-discrete versions of $0$-skeleta of CAT(0) cube complexes, in the same way in which real trees are non-discrete generalizations of simplicial trees. Indeed, according to Bowditch \cite{Bow14,Bow16}, the metric of a complete connected finite rank median metric space has a bi-Lipschitz equivariant deformation that is CAT(0). \footnote{Finite rank seems to be the optimal condition for the existence of such a deformation (see Section \ref{defn:Ydelta} for the notion of rank).}

The interest of median geometry comes, among other things, from the relevance of median graphs in graph theory and computer science \cite{ChepoiBandelt}, and in optimization theory (see \cite{MMR,Wildstrom} and references therein). Moreover, two important properties of infinite groups, Kazhdan's Property (T) and a-T-menability, can be reformulated in terms of actions on median spaces \cite{CDH-adv}.
The {\it $\delta$-hyperbolicity} of metric spaces, characterized by the fact that all geodesic triangles are $\delta$-thin, is connected with both median and Banach space geometry (e.g. G. Yu \cite{Yu} showed that every hyperbolic group acts properly on an $\ell^p$-space, for $p$ large enough). In this paper, we investigate the following common generalization.
\begin{defn}\label{dmedian}[roughly median (pseudo-)metric spaces and groups]
We say that a (pseudo-)metric space $(X,\pdist)$ is \emph{roughly median} (or \emph{$\delta$--median} if we want to emphasize the constant) if there is $\delta,D\geq 0$ such that, given any three points $x,y,z$, there is a point $m\in X$
$$m\in I_{2\delta}(x,y)\cap I_{2\delta}(y,z)\cap I_{2\delta}(z,x)\subseteq B(m,D)$$ 
where a {\it rough interval} or {\it $\delta$-interval} is the set
$$I_\delta(a,b)=\{t\in X\,|\,\pdist(a,t)+\pdist(t,b)\leq\pdist(a,b)+\delta\}.$$
Namely, we require any triple intersection of $\delta$-intervals to be non-empty and of diameter at most $D$. A group is called {\it roughly median} if it admits a geometric action (i.e. properly discontinuous and cocompact) on a roughly median metric space.
\end{defn}
This natural notion encompasses both median geometry and hyperbolicity, and is an instance of coarse median geometry, but is more restrictive as, for instance, the Euclidean space ${\R}^2$ is not roughly median, but is coarsely median. Whether any coarse median space can be given a rough median metric is unclear. In \cite{NWZ} the authors have a notion of intervals that allows to express a rough median, but their relation with metric intervals is unclear. The class of roughly median groups is contained in the a priori larger class of coarse median groups, and includes mapping class groups \cite{Petyt}, see Proposition \ref{Petyt}. It is stable under direct products, relative hyperbolicity (see Proposition \ref{relY}) and rough isometries, that is quasi-isometries with multiplicative constant one.

Another structure that appears naturally in the study of actions on Banach spaces is that of \textit{space with measured walls} (see Definition \ref{defn:spacemw}). We proved in \cite{CDH-adv} that, in some sense, the category of spaces with measured walls is equivalent to the one of subspaces of pseudo-metric median spaces (see Section \ref{defn:Ydelta}), by constructing, for any given wall space $X$, a median space $\mm (X)$ containing it. 

In the present work, we show that a $\delta$-median structure on a wall space $X$ is in fact equivalent to being at finite distance from the associated median space $\mm (X)$.
\begin{thm}\label{thm:main}
Let $(X, \mathcal{W} , \mu )$ be a space with measured walls, $\mu$-locally finite (Definition \ref{defn:mulocfinite}), and such that all its half-spaces are quasi-convex (Definition \ref{quasi-conv}). The following are equivalent:
\begin{enumerate}
    \item the metric space $(X,\dist)$ associated to the wall structure is $\delta$-median;
    \item the medianization $\mm(X)$ is at finite Hausdorff distance from $X$;
    \item $X$ admits an isometric embedding $\varphi$ into a median space $\mm$ such that $\mm$ is within finite Hausdorff distance from $\varphi (X)$. 
\end{enumerate}
\end{thm}
A more precise and expanded formulation of the above can be found in Theorem \ref{thm:equiv}, and combined with Proposition \ref{pro:totallybded} allows us to deduce the following.
\begin{cor}\label{chr}
The real hyperbolic space $\field H^n$ embeds isometrically and ${\mathrm{Isom}} (\field H^n)$--equi\-va\-riantly  into a locally compact median space at finite
Hausdorff distance from the embedded~$\field H^n$.
\end{cor}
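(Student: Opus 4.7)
The plan is to apply Theorem \ref{thm:equiv} to $\field H^n$ equipped with its standard measured walls structure, and then invoke the functoriality of $\mm(\cdot)$ to upgrade the resulting embedding to an $\mathrm{Isom}(\field H^n)$-equivariant one.

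First I would recall the classical structure $(\field H^n, \ww, \bb, \mu)$ where $\ww$ is the set of totally geodesic hyperplanes of $\field H^n$ and $\mu$ is the (essentially unique, up to positive scale) $\mathrm{Isom}(\field H^n)$-invariant Radon measure on $\ww$. A Crofton-type normalization exists for which the wall pseudo-metric $\pdist_\mu$ is a positive scalar multiple of the hyperbolic distance; the structure is then $\mu$-locally finite as required in the hypothesis of Theorem \ref{thm:equiv}, and every hyperbolic isometry is automatically an automorphism of it.

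The key step is to verify condition (\ref{(3)}) of Theorem \ref{thm:equiv}. The $\delta$-tripodality follows at once from the Gromov hyperbolicity of $\field H^n$: geodesic triangles are uniformly thin, so any three points admit an approximate center which is $\delta$-between any two of them, with $\delta$ depending only on $n$. For the second clause, the half-spaces in this walls structure are the two geodesically convex components of the complement of a totally geodesic hyperplane. If $x$ and $y$ both lie in such a half-space $H$, then the hyperbolic geodesic $[x,y]$ lies entirely in $H$ by convexity, and a point $z$ that is $\delta$-between $x$ and $y$ lies within a uniformly bounded distance $D=D(\delta,n)$ of $[x,y]$ by hyperbolicity, hence within distance $D$ of $H$.

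Having verified (\ref{(3)}), Theorem \ref{thm:equiv} gives an isometric embedding of $\field H^n$ into $\mm(\field H^n)$ at finite Hausdorff distance. Equivariance then follows from the functoriality recalled in item (2) of the introduction: every automorphism of $(\field H^n,\ww,\bb,\mu)$ induces an isometry of $\mm(\field H^n)$, and $\mathrm{Isom}(\field H^n)$ acts on the former by automorphisms. The most delicate ingredient I expect is the measure-theoretic verification that $\pdist_\mu$ agrees (up to scale) with the hyperbolic distance, which is classical but requires a careful computation on the space of totally geodesic hyperplanes; everything else reduces to the standard thinness of triangles and convexity of half-spaces in $\field H^n$.
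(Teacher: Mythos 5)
Your proposal is correct and follows essentially the same route as the paper: the standard measured-walls structure of Example \ref{exmp:realhyper} with the Crofton normalization, $\mu$-local finiteness, $\delta$-tripodality coming from Gromov hyperbolicity, an application of Theorem \ref{thm:equiv}, and equivariance via Proposition \ref{prop:medianization}. If anything, you are slightly more careful than the paper's Corollary \ref{cor:yd}, since you explicitly verify the second clause of condition \eqref{(3)} (points $\delta$-between two points of a half-space stay uniformly close to that half-space, via geodesic convexity of half-spaces and thinness of triangles in $\field H^n$) rather than deducing condition \eqref{(3)} from $\delta$-tripodality alone.
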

In particular, the full isometry group of the real hyperbolic space $\field H^n$, as well as all its uniform lattices, act properly and cocompactly on the locally compact median space associated to the usual measured walls structure on $\field H^n$.

\medskip

\textbf{1.b. Cubulable and medianizable groups.}\quad The previous results bring to light the existence, in the class of finitely generated groups, of several degrees of compatibility with median geometry, starting with cubulable, and proceeding in a decreasing order of strength, as follows.

\begin{defn}\label{def:cubmed}
A group is said to be
\begin{itemize}
\item {\emph{cubulable}} if it acts geometrically (i.e. properly discontinuously and cocompactly) on a CAT(0) cube complex; 
\item {\emph{strongly medianizable}} if it acts geometrically on a median space of finite rank (Definition \ref{rank});
\item {\emph{medianizable}} if it acts geometrically on a median space.
\end{itemize}
\end{defn}
Note that the cocompactness assumption is crucial here, and the study of proper actions on median spaces, according to \cite{CDH-adv}, is the one of {\it a-T-menability} (also sometimes called {\it Haagerup property}, or {\it antiT}). If a finitely generated group is cubulable, the CAT(0) cube complex on which it acts has finite dimension, so finite rank as a median space and if it is medianizable then the median space on which it acts is locally compact, without necessarily having finite rank in the sense of median spaces.

The difference between strongly medianizable versus cubulable is unclear, at this point it even seems possible that the two properties are equivalent. Some evidence comes from the fact that key properties known for cubulable groups (Tits alternative, superrigidity) have been proven for strongly medianizable groups as well \cite{F1,F3,F4}. Recent work by Messaci \cite{Me}, showing that a connected locally compact median space of finite rank which admits a transitive action is isometric to $\R^n$ endowed with the $\ell^1$-metric, provides further evidence for the equivalence of the two properties. Note that a Rips theorem for median spaces of finite rank as discussed in Section 1.d would give a weaker result than the equivalence between cubulable and strongly medianizable, if the connection between stabilizers is similar to the one established for actions on trees, as in \cite{BestvinaFeighn:stableactions,GLP,Sela:acces,Guirardel:trees,Best}. 

\medskip

On the other hand, the distinction between medianizable and strongly medianizable groups is clear: there are interesting examples of groups that are in the former class but not in the latter. Indeed, Corollary \ref{chr} implies the following. 
\begin{cor}\label{bchf}Uniform lattices in a product $SO(n_1,1)\times\dots\times SO(n_k,1)$, with $k\geq 1$, are medianizable.\end{cor}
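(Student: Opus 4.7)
The plan is to deduce Corollary \ref{bchf} from Corollary \ref{chr} by taking an $\ell^1$-product of the median spaces provided factor by factor. Concretely, apply Corollary \ref{chr} to each $\mathbb{H}^{n_i}$ to obtain a median space $M_i$ together with an $\mathrm{Isom}(\mathbb{H}^{n_i})$-equivariant isometric embedding $\mathbb{H}^{n_i}\hookrightarrow M_i$ whose image is at finite Hausdorff distance $D_i$ from $M_i$. Form the $\ell^1$-product $M:=M_1\times\cdots\times M_k$, endowed with the metric
\[
d\bigl((x_1,\ldots,x_k),(y_1,\ldots,y_k)\bigr)=\sum_{i=1}^{k} d_{M_i}(x_i,y_i).
\]
This is again a median space: the coordinatewise median is a median for the $\ell^1$ sum, which is a standard (and easy) fact about median algebras, already implicit in the discussion of examples in the introduction (products of trees or of CAT(0) cube complexes with the $\ell^1$ metric).

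Next I would observe that the product $H:=\mathbb{H}^{n_1}\times\cdots\times\mathbb{H}^{n_k}$, equipped with the $\ell^1$-product of the hyperbolic metrics, embeds isometrically and $(\mathrm{Isom}(\mathbb{H}^{n_1})\times\cdots\times\mathrm{Isom}(\mathbb{H}^{n_k}))$-equivariantly into $M$, and that the Hausdorff distance between the embedded copy of $H$ and $M$ is at most $D_1+\cdots+D_k$. Let $\Gamma$ be a uniform lattice in $G:=SO(n_1,1)\times\cdots\times SO(n_k,1)$. By definition of uniform lattice, $\Gamma$ acts properly discontinuously and cocompactly on the associated symmetric space $H$ with its Riemannian product metric, and since the Riemannian product metric and the $\ell^1$-product metric on the finite product $H$ are bi-Lipschitz equivalent, the action of $\Gamma$ on $(H,\ell^1)$ is again properly discontinuous and cocompact.

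Finally, I would transfer this action from $H$ to $M$ via the equivariant isometric embedding. The orbit of a fundamental domain for $\Gamma\curvearrowright H$ already covers $H$ and hence lies within the fixed Hausdorff distance $D_1+\cdots+D_k$ of $M$, giving a cobounded action on $M$. For proper discontinuity, any bounded subset $B\subset M$ lies in the $(D_1+\cdots+D_k)$-neighborhood of a bounded subset $B'\subset H$; by equivariance of the embedding, the condition $\gamma B\cap B\neq\emptyset$ forces $\gamma$ to move some point of $B'$ within a uniformly bounded distance of $B'$, and only finitely many $\gamma\in\Gamma$ can do this since $\Gamma$ acts properly discontinuously on $H$. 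Hence $\Gamma$ acts properly discontinuously and coboundedly on the median space $M$, proving that $\Gamma$ is medianizable.

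There is no real obstacle here beyond bookkeeping: the only point that deserves a careful sentence is the verification that the $\ell^1$-product of median spaces is median and that properness survives the passage through a finite-Hausdorff-distance equivariant embedding. The statement that such lattices are not \emph{strongly} medianizable (hinted at in the discussion following Definition \ref{def:cubmed}) would be a separate and much harder matter, but it is not part of Corollary \ref{bchf}.
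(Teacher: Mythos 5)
Your proof is correct and takes essentially the paper's route: deduce the statement from Corollary \ref{chr} factor by factor, pass to the product, and use the finite Hausdorff distance to get coboundedness while transporting properness through the equivariant isometric embedding. The only (cosmetic) difference is that the paper works with the median space associated to the product measured-walls structure on $\field H^{n_1}\times\dots\times\field H^{n_k}$ (via Corollaries \ref{cor:heredity}, \ref{cor:yd} and \ref{cor:hypreal}), whereas you take the $\ell^1$-product of the factor median spaces directly; both serve equally well for the conclusion.
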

For $k\geq 2$, irreducible uniform lattices in products $SO(n_1,1)\times\dots\times SO(n_k,1)$ are not cubulable, due to results of Chatterji-Fernos-Iozzi \cite{CFI}. Moreover, by work of Fioravanti \cite{F1,F4}, they are not strongly medianizable either and, in fact, any isometric action on a finite rank median space has a fix point. Thus, Corollary \ref{bchf} is the best one can hope for, in terms of median geometry, for these lattices. It is unknown if these same lattices can act properly on an infinite dimensional CAT(0) cube complex.

\me

Even in the case of one factor ($k=1$), Corollary \ref{bchf} may turn out to be significant. It is not known if all arithmetic uniform lattices in $SO(n,1)$, with $n$ odd and larger than $3$, are cubulable, cocompactly or even just properly. It is for instance the case for the uniform lattices described in \cite{VS,LiMillson} and \cite[$\S 6$]{Ka}. The general consensus seems to be that these lattices are cubulable, except for the construction in dimension 7 \cite{Be}, especially for the lattices constructed using octonions instead of quaternions \cite[Theorem 6.7]{Ka}. For the congruence subgroups of these latter lattices, it is proved in \cite{BC} that the first Betti number is always zero. Thus, to cubulate these lattices one would have to find finite index subgroups other than congruence subgroups, and it is not known if such subgroups exist.

\medskip

\textbf{1.c. Other rank one symmetric spaces.}\quad Quaternionic hyperbolic spaces and the Cayley hyperbolic plane over the field of octonions cannot be equipped with measured walls structures, due to the fact that their groups of isometries have Property (T). As far as the complex hyperbolic spaces are concerned, we explained in \cite{CDH-adv} how, using results of Faraut and Harzallah \cite{FarautHarz}, they can also be
equipped with structures of spaces with measured walls (the wall metric being in this case
$\dist^{\frac{1}{2}}$, where $\dist$ is the hyperbolic metric), hence their groups of isometries also act properly by isometries on a median space. However, the action is no longer cobounded. 
\begin{cor}\label{HC}
\begin{enumerate}
\item\label{nowalls} The space $(\field H^n_\C , \dist )$ cannot be isometrically embedded into a median space. In particular, the hyperbolic distance $\dist$ cannot be a wall metric. 

\medskip

\item\label{complexsnow} For any $\alpha \in [1/2,1)$, whenever $\dist^{\alpha}$ is a wall metric, $(\field H^n_\C, \dist^\alpha )$ cannot be at bounded Hausdorff distance from a median space.  
\end{enumerate}
\end{cor}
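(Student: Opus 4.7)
My plan is to treat the two parts of Corollary \ref{HC} with complementary strategies: part (\ref{nowalls}) by reducing to the non-embeddability of $(\field H^n_\C,d)$ into any $L^1$-space, and part (\ref{complexsnow}) by applying Theorem \ref{thm:equiv} and constructing explicit configurations that violate condition (\ref{convex}).

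For (\ref{nowalls}), suppose for contradiction that $(\field H^n_\C,d)$ admits an isometric embedding into a median metric space. By the correspondence summarized in items (1)--(2) of the introduction (see \cite[Section 5]{CDH-adv}), this is equivalent to $d$ being the wall pseudo-metric of some measured-walls structure on $\field H^n_\C$; in particular the two assertions of (\ref{nowalls}) are equivalent, and it suffices to rule out that $d$ is a wall metric. Every wall pseudo-metric embeds isometrically into an $L^1$-space via characteristic functions of half-spaces, so the task reduces to showing that $(\field H^n_\C,d)$ does not embed isometrically into any $L^1$-space. By the Bretagnolle--Dacunha-Castelle--Krivine criterion (equivalently, by Schoenberg's theorem applied to the square-root kernel), such an embedding requires $e^{-td}$ to be of positive type for every $t>0$, i.e.\ $d$ to be conditionally negative definite. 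I would then invoke the spherical-function asymptotics of $SU(n,1)$ acting on $\field H^n_\C$, building on Faraut--Harzallah, to see that for $n\geq 2$ the maximal exponent $\alpha$ for which $d^\alpha$ is conditionally negative definite on $\field H^n_\C$ is $\alpha=1/2$; in particular $d$ itself is not conditionally negative definite, which yields the required contradiction.

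For (\ref{complexsnow}), fix $\alpha\in[1/2,1)$ such that $d^\alpha$ is a wall metric on $\field H^n_\C$ for some measured-walls structure $(\field H^n_\C,\ww,\bb,\mu)$. Assume for contradiction that the embedding $\field H^n_\C\hookrightarrow \mm(\field H^n_\C)$ has bounded Hausdorff distance. The implication (\ref{Haus})$\Rightarrow$(\ref{convex}) of Theorem \ref{thm:equiv} yields a constant $K$ such that for every half-space $h$, every $x\notin h$, and every $1$-projection $p$ of $x$ on $h$, the $\mu$-measure of the set of walls that both separate $x$ from $p$ and intersect $h$ is at most $K$. The plan is to produce a one-parameter family $(h_T,x_T,p_T)$ of configurations violating this bound. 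Using the $SU(n,1)$-equivariance of the measured-walls structure inherited from Faraut--Harzallah, the analysis reduces to a single model: let $h$ be a horoball centred at a boundary point $\xi$, let $p$ lie on the bounding horosphere, and let $x$ be at hyperbolic distance $T$ from $p$ along a geodesic transverse to the horosphere. The total wall-mass between $x$ and $p$ equals $\pdist_\mu(x,p)=d(x,p)^\alpha\sim T^\alpha$, and the core step is to verify that a definite positive fraction of these walls also crosses $h$. Since $T^\alpha\to\infty$, this contradicts the uniform bound $K$.

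The principal obstacle lies in the last geometric step of part (\ref{complexsnow}): the quantitative estimate of wall density near a horosphere in $\field H^n_\C$ that exhibits the desired positive fraction of walls intersecting $h$. This requires a fine description of how the walls of the Faraut--Harzallah structure sit with respect to horospheres, and reflects the Heisenberg/CR geometry of the ideal boundary of $\field H^n_\C$. The underlying reason the estimate succeeds is precisely the anisotropy between totally real and totally complex tangent directions in $\field H^n_\C$, which is absent from $\field H^n_\R$ and is what forces the conclusion to fail for the complex case while Corollary \ref{chr} holds for the real one.
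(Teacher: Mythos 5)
Your argument for part \eqref{nowalls} rests on a false input. By Faraut--Harzallah, the hyperbolic distance $\dist$ on $\field H^n_\C$ \emph{is} conditionally negative definite; this is precisely the source of the measured-walls structure with wall metric $\dist^{1/2}$ used in the paper (CND of $\dist$ gives a Hilbert-space embedding of $(\field H^n_\C,\dist^{1/2})$, and $L^2$ embeds isometrically in $L^1$, whence $\dist^{1/2}$ is a wall metric). Consequently the necessary condition you extract from the Bretagnolle--Dacunha-Castelle--Krivine/Schoenberg criterion (negative type of $\dist$) is satisfied and produces no contradiction, and your claim that $1/2$ is the maximal exponent with $\dist^\alpha$ conditionally negative definite is also wrong (CND of $\dist$ passes to all powers $\dist^\alpha$, $0<\alpha\le 1$). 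The true obstruction to an isometric embedding into a median (equivalently, $L^1$-type) space is strictly finer than negative type: it lives at the level of measure-definiteness and convexity. That is what the paper's proof uses: an isometric embedding into a median space would pull back a nontrivial wall, i.e.\ a partition of $\field H^n_\C$ into two nonempty subsets each convex for $\dist$, hence a convex (totally geodesic) hypersurface of real codimension one, and no such hypersurface exists in $\field H^n_\C$ for $n\ge 2$.

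For part \eqref{complexsnow} the gap is exactly where you flag it: the ``definite positive fraction of walls crossing $h$'' estimate is the entire content of your argument and is not proved; it is also not clear it could be, in the generality required. The statement quantifies over \emph{any} measured-walls structure inducing $\dist^\alpha$, so you may not assume it is the $SU(n,1)$-equivariant Faraut--Harzallah structure, that horoballs occur among its half-spaces, or that it is $\mu$-locally finite --- the last being a standing hypothesis of Theorem \ref{thm:equiv}, which your route invokes. The paper avoids all of this: Proposition \ref{prop:snowflake} and Corollary \ref{cor:snow} show, by the elementary concavity estimate of Lemma \ref{lem:alphabeta}, that for any $\alpha<1$ a snowflaked metric $\dist^\alpha$ admits no $\delta$-tripodal point for triples whose pairwise $\dist$-distances diverge; since bounded Hausdorff distance from a median space forces $\delta$-tripodality, $(\field H^n_\C,\dist^\alpha)$ can never be coarsely dense in a median space, regardless of the wall structure. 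In particular your closing heuristic misidentifies the mechanism: it is not the real/complex anisotropy of $\field H^n_\C$ (snowflaked \emph{real} hyperbolic space fails for exactly the same reason); the complex geometry enters only in part \eqref{nowalls}, where it is what forces one to snowflake at all.
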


\me

\textbf{1.d. Possible further applications and open questions.}

\me

\textbf{Infinite dimensional real hyperbolic spaces.} \quad These spaces can be defined either as quotients of stabilizers of quadratic forms of signature $1$ on infinite dimensional real vector spaces \cite[$\S 6.A.III$]{Gro}, or by an infinite dimensional hyperboloid model \cite{BIM,MonodPy14}. They have a natural structure of measured walls, inducing a metric that equals the hyperbolic metric (see Remark \ref{infdim} and references therein for details).

Some important groups have interesting actions on these spaces, such as the groups of automorphisms of (products of) regular trees and their lattices (including the Burger-Mozes examples) \cite{BIM} and the groups of birational transformations of complex K\"ahler surfaces \cite{CantatAnn11}. It seems natural to ask the following.
\begin{quest}\label{ques:infdim}
Is the infinite dimensional hyperbolic space at finite Hausdorff distance from the median space associated to it?
\end{quest}

\me

\textbf{Rips-type theorems for median spaces.}\quad Our results are also relevant in the setting of potential extensions of Rips-type theorems to actions on median spaces \cite[Question 1.11]{CDH-adv}. The existing Rips-type theorems provide conditions under which a non-trivial (i.e. without global fixed point), minimal action of a group $G$ on a real tree $T \neq \R$, yields a non-trivial cocompact action on a simplicial tree, with stabilizers of edges virtually cyclic extensions of stabilizers of arcs in $T$ (see \cite{BestvinaFeighn:stableactions,GLP,Sela:acces,Guirardel:trees,Best} and references therein). 

For instance, Bestvina and Feighn prove in \cite[Theorem 9.5]{BestvinaFeighn:stableactions} that if $G$ is finitely presented, and the action $G\acts T$ is stable (a condition satisfied by free actions)  then the required action on a simplicial tree can always be produced (with virtually cyclic stabilizers of edges, if the initial action was free). 

The interest of a Rips-type theorem for median spaces is that it would relate the negation of property (T) to actions on CAT(0) cube complexes, and it would provide conditions under which a-T-menability implies weak amenability with Cowling-Haagerup constant 1 \cite[$\S 1.3$]{CDH-adv}.

The results in this paper show that \emph{one cannot expect, for actions on median spaces, a theorem similar to the one of Bestvina-Feighn}: uniform irreducible lattices in products $SO(n_1,1)\times\dots\times SO(n_k,1)$ are finitely presented, they act properly discontinuously, minimally and with compact quotient on median spaces, but cannot act non-trivially on a CAT(0) cube complex \cite{CFI} when $k\geq 2$. Still, the following question may still have a positive answer 

\begin{quest}
It is possible to obtain Rips-type theorems for stable (e.g. proper) actions of finitely presented groups on median spaces of finite rank?
\end{quest}

This is consistent with the case of real trees, which are median spaces of rank one. For such Rips theorems, the most appropriate condition corresponding to the condition $T\neq \R$ for trees seems to be ``median space with no global fixed point at infinity under the full isometry group, and which is not within bounded Hausdorff distance from a space $\R^n$ with the $\ell^1$ norm''. This follows from the fact that a finite rank median metric space has a bi-Lipschitz equivariant metric that is CAT(0), and from the results in \cite{CapraceMonod}.

\me

\textbf{1.e. Plan of the paper.} In Section \ref{defn:Ydelta} we recall the definition of (pseudo-)median space, and define and discuss other relevant notions, such as that of $\delta$-tripodic and of \eff $\delta$-tripodic spaces. Section \ref{sec:medianization} recalls the construction of a median space associated to a space with walls (this is done in more detail in \cite{CDH-adv,F1,F2}). Section \ref{sec:Hdist} is devoted to the proof of Theorem \ref{thm:equiv}. In Section \ref{RhypLocCpct} we prove that the medianization of the real hyperbolic space is locally compact. Section \ref{Chyp} discusses the complex hyperbolic case and Corollary \ref{HC}.
\medbreak

\noindent {\it Acknowledgments:} The work on the present paper started during visits to
the Universities of Paris XI (Paris-Sud) and Lille 1, while our collaboration with Fr\'ed\'eric Haglund \cite{CDH-adv} was carried out. We thank Fr\'ed\'eric for the insight he gave us on the subject, and for proofs of the implications $\eqref{(3)}\Rightarrow\eqref{convex}$ and $\eqref{convex}\Rightarrow\eqref{Haus}$ in Theorem \ref{thm:equiv} for a strengthening of $\delta$-tripodic spaces, that allowed us to find the final equivalence. We thank Elia Fioravanti for many interesting discussions on median spaces, and Thomas Delzant for explanations on complex hyperbolic spaces. We also thank Pierre-Emmanuel Caprace, Anthony Genevois, Graham Niblo and Harry Petyt for useful comments and corrections. We are also truly grateful to an anonymous referee, whose insightful questions significantly improved the paper.

The authors would like to thank MSRI and the Isaac Newton Institute for Mathematical Sciences, Cambridge, for support and hospitality during the programmes ``Geometric Group Theory'' (MSRI) and ``Non-positive curvature: group actions and cohomology'' (INI), the latter programme supported by EPSRC grant no EP/K032208/1. The second author would also like to thank Max Planck Institute for Mathematics in Bonn for its hospitality during the summers of 2021 and 2022. The first author is partially supported by IUF and ANR GAMME, and the second author by the EPSRC grant no EP/K032208/1 entitled “Geometric and analytic aspects of infinite groups” and by the LABEX CEMPI.
\section{Tripodic and median spaces.}\label{defn:Ydelta}
Recall that a \emph{pseudo-metric space} $(X,\pdist )$ is a space such that $\pdist$ satisfies all the properties of a distance, except for ``$\pdist (x,y)=0 \Rightarrow x=y$''. Its \emph{separation} is the metric space $\widehat{X}$ obtained as a quotient by the equivalence relation $\pdist (x,y)=0$.

\begin{defn}\label{rank}
A point $a$ in the pseudo-metric space $X$ is said to be \emph{between} two other points $x,y$ in $X$ if 
$$\pdist (x,a)+\pdist (a,y)=\pdist (x,y).$$ 
The \emph{interval} $I(x,y)$ of endpoints $x$ and $y$ is the set of points that are between $x$ and $y$. A point $a$ is said to be $\delta$--{\emph{between}} two other points $x,y$ if 
$$
\pdist (x,a) + \pdist (a,y)\leq \pdist (x,y)+\delta .
$$
When $\delta=0$, $a$ is between $x$ and $y$ and it belongs to the interval $I(x,y)$. The $\delta$-interval $I_\delta(x,y)$ of Definition \ref{dmedian} is the set of points that are $\delta$-between $x$ and $y$.  

A subset $A$ in $X$ is called \emph{convex} if every point $a$ that is between two points $x,y$ in $A$ lies in $A$. When $X$ is a metric space, this is equivalent to the fact that $I(x,y) \subseteq A$ for every two points $x,y$ in $A$. 

A \emph{median pseudo-metric space} is a space for which, given any triple of points $x,y,z$, the set $I(x,y) \cap I(y,z) \cap I(x,z)$ is non-empty and has diameter zero. Any point in the latter set is called \emph{median point for the triple $x,y,z$}. When $\pdist$ is a metric, this coincides with the notion of \emph{median metric space} recalled in the introduction.

The {\emph{rank}} of a median metric space $X$ is the supremum over the set of integers $k$ such that $X$ contains an isometric copy of the set of vertices $\{-a,a\}^k$ of the cube of edge length $2a$ endowed with the induced $\ell_1$-metric, for some $a>0$. Equivalently, it is the supremum over the set of integers $k$ so that there exist $k$ pairwise transverse convex walls. The {\emph{rank}} of a median pseudo-metric space $(X, \pdist )$ equals the rank of its separation.  
 \end{defn}

 

Given $R>0$, the {\em open $R$--neighbourhood} of
a subset $A$, i.e. $\{x\in X:
\pdist (x, A)<R\}$, is denoted by $\nn_R(A)$.  In particular, if $A=\{a\}$ then $\nn_R(A)=B(a,R)$
is the {\em open $R$--ball centered at $a$}. We use the notation $\onn_R (A)$ and $\bar{B}(a,R)$ to designate the
corresponding {\em closed neighborhood} and {\em closed ball},
defined by non-strict inequalities. 
\begin{rem}\label{rem:extree}
In any metric space, since for $a\in \onn_R(I(x,y))$ there is a point in $I(x,y)$ at distance less than $R$, one can see that
\begin{equation}\label{eq:nrinterval}
\onn_R(I(x,y))\subseteq I_{2R}(x,y).   \end{equation}
\end{rem}
The reverse inclusion is true in median metric spaces, but false in general. It is false in a Euclidean space, even with $\onn_R(I(x,y))$ replaced by $\onn_D(I(x,y))$, for some uniform $D=D(R)$. The following examples show spaces that are at bounded distance from a median space where the reverse inequality is still false.
\begin{exmps}\label{trou}
Consider a simplicial tree $T$, the median space $M=T\times [0,1]$, and the subspace $X$ of $M$ obtained by removing relative interiors of rectangles $[x_n, y_n]\times [0,1]$ and, in the copy $T\times \{1\}$, the relative interiors of geodesics $[x_n,y_n ]\times \{1\}$, with $[x_n,y_n ]$ pairwise disjoint geodesics in $T$ of respective length $n$. In the space $X$ with the metric induced from $M$, $I((x_n,1),(y_n,1))=\{ (x_n,1),(y_n,1)\}$, while $I_2 ((x_n,1),(y_n,1))$ contains the entire geodesic $[x_n,y_n ]\times \{0\}$ (see Figures \ref{fig:trou}).

Another example would be to consider ${\mathbb R}^2$ with the $L^1$ norm, remove the interior of pairiwise disjoint translates of squares $[0,n]\times[0,n], n\in \N$, and replace them with the other faces of a height one parallelepiped over that square, with the interior of the bottom square removed.  
\begin{figure}
    \centering
    \includegraphics[width=0.5\textwidth]{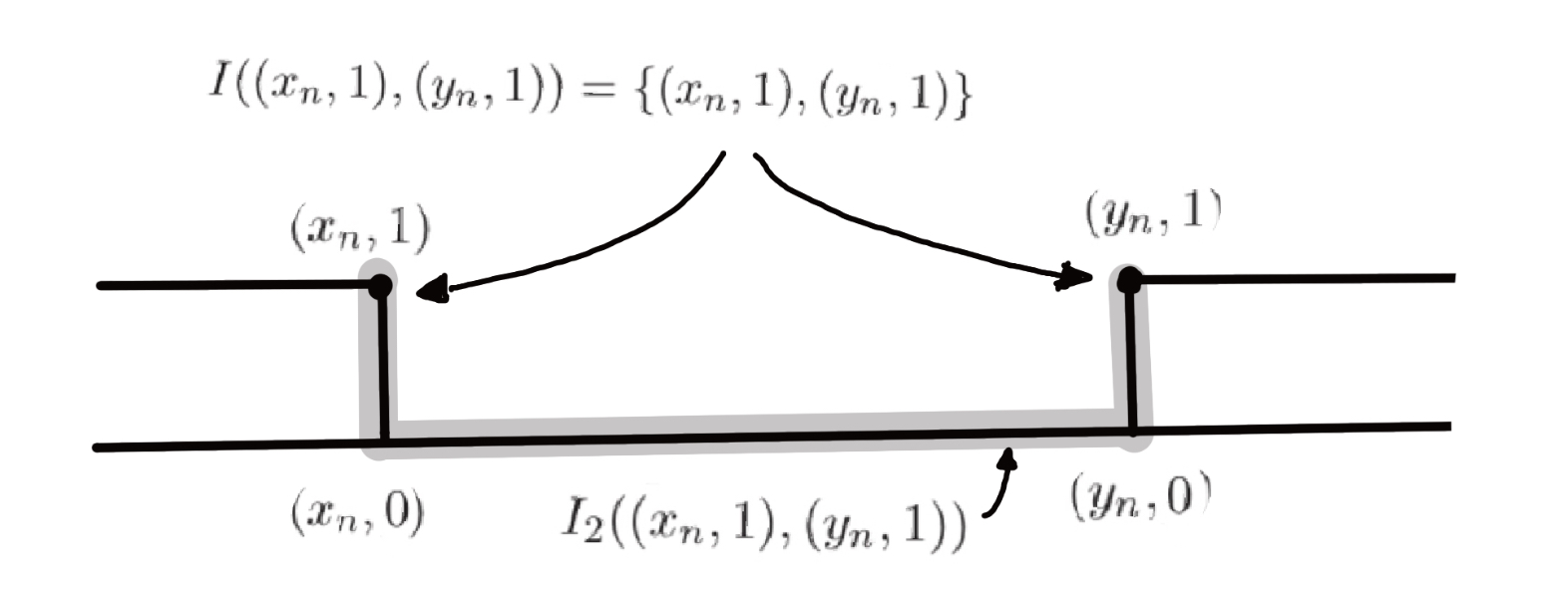}
    \caption{A line $T\times[0,1]$ with pieces of a rectangle removed}
    \label{fig:trou}
\end{figure}
\end{exmps}
\begin{rem}\label{ProdInterval}
    Let $(X_1,d_1)$ and $(X_2,d_2)$ be metric space, and $\delta_1,\delta_2\geq 0$. Then, for any $x=(x_1,x_2), y=(y_1,y_2)\in (X_1\times X_2, d_1+d_2)$, a direct computation shows that
$$I_{\delta_1}(x_1,y_1)\times I_{\delta_2}(x_2,y_2)\subseteq I_{\delta_1+\delta_2}(x,y)\subseteq I_{\delta_1+\delta_2}(x_1,y_1)\times I_{\delta_1+\delta_2}(x_2,y_2).$$
The second inclusion is false in general for the $\ell^2$ combination of the metrics, even with different constants. For instance, in the Euclidean plane $\R^2$, taking $x=(0,0)$ and $y=(2R,0)$, one checks that $t=(R,\sqrt{R\delta})\in I_{\delta}(x,y)$ is not contained in the rectangle $I_{\delta}(0,2R)\times I_{\delta}(0,0)$, when $R$ is large enough.
\end{rem}
The following generalizations of both median spaces and Gromov hyperbolic spaces requires, loosely speaking, that the set of `quasi-centers' of any triple of points is non-empty, or that the `quasi-centers' come from thin geodesic triangles.
\begin{defn}\label{Yd}[(\eff) tripodic (pseudo-)metric spaces and groups]
A (pseudo-)metric space $(X,\pdist )$ is called \emph{tripodic} (or {\it $\delta$-tripodic} when we want to specify the constant) if, given any three points $x,y,z$, 
$$I_{2\delta}(x,y)\cap I_{2\delta}(y,z)\cap I_{2\delta}(z,x)\not=\emptyset.$$

 We say that $X$ is \emph{\eff tripodic (or $\delta$-tripodic)} if given any three points $x,y,z$, there exists a $\delta$-\emph{thin} geodesic triangle with vertices $x,y,z$, namely a geodesic triangle such that each of its sides is contained in the $\delta$-neighborhood of the union of the other two sides.

A finitely generated group $G$ is {\it (\effs) tripodic (or $\delta$-tripodic)} if it admits a geometric action on a (\effs) tripodic metric space.
\end{defn}
\begin{rem}
Roughly median spaces and groups (Definition \ref{dmedian}) are in particular tripodic. A tripodic space is then roughly median if the diameter of those non-empty triple intersections is uniformly bounded. Similarly, we can talk about geodesically roughly median spaces, when one can actually see those thin triangles in the space, which is not always the case, as Example \ref{trou}
shows.\end{rem}
\begin{exmps}\label{ex:tripodic} 
\begin{enumerate}
\item In metric graph theory, $0$-tripodic graphs are called modular graphs. Thus, all the examples of modular graphs are also examples of $0$-tripodic spaces.
\item Pseudo-modular graphs (in particular Helly graphs) are $1$-tripodic \cite[$\S 2.2.2.2$]{CCHO}. 
\item Quasi-median graphs of finite cubical dimension $d$ are $d$-tripodic by work of Genevois \cite[Proposition 2.84]{Gen17}.
\end{enumerate}
\end{exmps}

\begin{rem}\label{rem:nbhd}
 Geodesically $\delta$-tripodic implies tripodic, but the converse implication is not true in general. Examples \ref{trou} depict spaces that are $1$-tripodic, but not geodesically $1$-tripodic.
These examples display the key difference between the two notions: in a $\delta$--tripodic space in general we may not have that $I_\delta(x,y)$ is a subset of $\nn_{D}(I(x,y))$ for all $x,y\in X$, for some uniform constant $D$; in a \eff $\delta$-tripodic space, on the other hand, for any $\alpha\geq 0$ there exists $D=D(\delta,\alpha)\geq 0$ such that  $I_\alpha(x,y)\subseteq\nn_{D}(I(x,y))$.
Indeed, take $t\in I_\alpha(x,y)$ and consider a $\delta$-thin triangle with vertices $x,y,t$. Take $p\in I(x,y)$, at distance less than $\delta$ from both $p_x\in I(x,t)$ and $p_y\in I(y,t)$. Then $\pdist(x,p)\leq\pdist(x,p_x)+\delta$ and $\pdist(y,p)\leq \pdist(y,p_y)+\delta$ and from $t\in I_\alpha(x,y)$ we get that
$$\pdist(x,p_x)+\pdist(p_x,t)+\pdist(t,p_y)+\pdist(p_y,y)\leq \pdist(x,p)+\pdist(p,y)+\alpha$$
hence $\pdist(p_x,t)+\pdist(t,p_y)\leq 2\delta+\alpha$ and so $2\pdist(t,p_y)-2\delta\leq 2\delta+\alpha$ since $\pdist(p_x,t)\leq \pdist(p_y,t)+2\delta$. Then $\pdist(t, p)\leq \pdist(t,p_y)+\delta\leq D(\alpha,\delta)$, so that $t\in\nn_{D}(I(x,y))$.
\end{rem}
\begin{exmps}\label{hypY} 
\begin{enumerate}
\item Geodesic Gromov hyperbolic spaces are (\effs) $\delta$-tripodic;
\item (geodesic) median spaces are (\effs) $0$-tripodic; 
\item if $(X_1, d_1)$ and $(X_2,d_2)$ are $\delta_1$ and $\delta_2$-tripodic respectively, then, from the inclusions in Remark \ref{ProdInterval}, one deduces that the product $(X_1\times X_2, d_1+d_2)$ is $(\delta_1+\delta_2)$-tripodic.  In particular, products of hyperbolic spaces are $\delta$-tripodic.
\item Euclidean (and Hilbert) spaces cannot be $\delta$-tripodic for any $\delta<\infty$ unless if they are of real dimension one. 
\item According to Huang and Osajda \cite{HO21}, certain Artin groups and Garside groups (e.g. braid groups) are tripodic.
\end{enumerate}
\end{exmps}
With the above definition and $D=0$, a 0--median space is a median space. It is immediate from the definitions that a roughly median space is coarse median in the sense of Bowditch \cite{Bow13}. 
\begin{rem}
The property of being roughly median is strictly stronger than that of tripodic: the building associated to $Sp(4,\Q_p)$ admits a metric that is $0$-tripodic. Indeed, this metric corresponds to a consistent choice of a pair of orthogonal roots in each flat. This is possible because it has two pairs of roots of different lengths. For such a metric, intervals grow exponentially, medians are not unique and can have arbitrarily large diameters. The metric cannot be median because $Sp(4,\Q_p)$ has property (T). A similar argument works for $Sp(2n, \Q_p)$ and $SL_4(\Q_p)$. They cannot be coarse median either \cite{Hae}, hence in particular not roughly median.
\end{rem}
Being tripodic or roughly median behaves well under rough isometries, which are quasi-isometries with multiplicative constant 1, see in Arnt's PhD thesis \cite{ArntPhD}. Those notions also behave well under relative hyperbolicity, as the following stability result shows.
\begin{prop}\label{relY}
Let $G$ be a finitely generated group hyperbolic relative to a finite family of finitely generated subgroups $H_1,..., H_n$. Assume that for every $i\in \{1,...,n\}$ the subgroup $H_i$ is $\delta$-tripodic (resp $\delta$-median). Then the group $G$ is $\alpha$-tripodic (resp. roughly median) for some $\alpha\geq 0$.
\end{prop}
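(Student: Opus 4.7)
The plan is to combine the $\delta_0$-hyperbolicity of the coned-off Cayley graph of $G$ relative to the peripheral subgroups with the $\delta$-tripodal property of each $H_i$, using the bounded coset penetration property (BCP) of Farb and Osin.

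First, I equip $G$ with a left-invariant metric $d$, quasi-isometric to the word metric, by replacing on each peripheral coset $gH_i$ the restricted word metric by the left-translate of a fixed $\delta$-tripodal metric on $H_i$; the Cayley graph is left unchanged outside the peripheral cosets. Since in the relatively hyperbolic setting the inclusions $H_i \hookrightarrow G$ are quasi-isometric embeddings, the metric $d$ is indeed quasi-isometric to the word metric on $G$. Coning off every peripheral coset yields a graph $\hat{\Gamma}$ that is $\delta_0$-hyperbolic for some $\delta_0 \geq 0$.

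Given $x,y,z \in G$, I fix relative geodesics $\gamma_{xy}, \gamma_{yz}, \gamma_{xz}$, each a concatenation of Cayley paths and $d_i$-geodesic arcs through peripheral cosets. In $\hat{\Gamma}$ the resulting triangle is $\delta_0$-thin and admits a Gromov center $\hat{m}$. \emph{Case A:} neither $\hat{m}$ nor the witnessing shortest paths in $\hat{\Gamma}$ traverse a cone point; then $\hat{m}$ lifts to an element $m \in G$ lying at uniformly bounded $d$-distance from each of the three sides, and a standard hyperbolic estimate gives that $m$ is $\mu_1$-between each pair of $\{x,y,z\}$, with $\mu_1$ depending only on $\delta_0$ and the quasi-isometry constants. \emph{Case B:} $\hat{m}$ is (or is joined to each side through) a cone point over some peripheral coset $gH_i$; BCP then forces each of the three sides to enter and leave $gH_i$, and the two entry/exit points provided by the two sides sharing a common endpoint lie within a uniform BCP-constant of one another. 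This produces a triple $x',y',z' \in gH_i$ with $d(x,x') + d(x',y') + d(y',y) \leq d(x,y) + O(1)$ and analogous estimates for the other two pairs. Applying the $\delta$-tripodal property of $H_i$, transported to $gH_i$ by left multiplication, furnishes a point $m \in gH_i$ that is $\delta$-between each pair of $\{x',y',z'\}$, and combining with the external legs shows that $m$ is $\mu_2$-between each pair of $\{x,y,z\}$ for a constant $\mu_2$ depending only on $\delta$, $\delta_0$ and the BCP constants. Setting $\mu := \max(\mu_1,\mu_2)$ concludes the proof.

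The principal obstacle will be the error control in Case B: one must verify that the two projections of $x$ obtained respectively from $\gamma_{xy}$ and $\gamma_{xz}$ agree up to a bounded error (and similarly for $y$ and $z$), with the bound independent of which peripheral coset $gH_i$ is involved. This requires a careful application of BCP together with the matching properties of relative quasi-geodesics; it may be convenient to first replace the initial triangle by a quasi-geodesic triangle sharing common initial subpaths in each peripheral coset that its vertices visit, so that the three projections onto $gH_i$ are consistent by construction.
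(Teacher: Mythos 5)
Your overall strategy coincides with the paper's: replace the induced metric on each peripheral coset by the $\delta$-tripodal metric of $H_i$, locate a ``central'' coset (or point) for a triangle on $x,y,z$, and take the tripodal quasi-median there. The paper obtains the central-coset statement in one stroke by quoting \cite{DrutuSapir:TreeGraded,Drutu}: for any geodesic triangle there is a coset $gH_i$ such that the three pairs of entrance/exit points of the sides in that coset are pairwise $\delta$-close; the remaining estimate is three lines. What you propose instead is to re-derive this statement from the coned-off graph and BCP, and you yourself flag exactly that step (``the principal obstacle \dots error control in Case B'', i.e.\ that the projections of $x$ coming from $\gamma_{xy}$ and from $\gamma_{xz}$ agree up to a uniform constant) as not carried out. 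That matching-of-projections statement \emph{is} the technical heart of the proposition, so as written the argument is incomplete: either quote the packaged result, as the paper does, or actually prove the BCP derivation.

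Two further points would need repair even granting that step. First, the dichotomy ``$\hat m$ is/is not (near) a cone point'' is not the right one: the Gromov centre of the coned-off triangle can be an ordinary vertex while the sides still make deep excursions into some peripheral coset nearby, and the sides may pass through many cone points away from the centre; the correct dichotomy, which the cited result encodes, is whether or not there is a single coset that all three sides penetrate deeply near the centre. Second, and more seriously for the quantitative claim in Case B: you run the argument along \emph{relative} geodesics, which are only quasi-geodesics for the modified metric $d$, with a multiplicative constant $\lambda>1$ in general. From $x',y'$ lying on such a path you can only conclude $d(x,x')+d(x',y')+d(y',y)\le \lambda\, d(x,y)+c$, which is useless here, because the definition of $\delta$-tripodal tolerates only an \emph{additive} error. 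You must work with genuine $d$-geodesics (as the paper does, after checking that $(G,d)$ is still relatively hyperbolic), noting that these are relative quasi-geodesics so that BCP and the central-coset statement still apply to them; similarly, in Case A, closeness to the three sides in $\hat\Gamma$ must be upgraded to bounded $d$-distance from the $d$-geodesic sides before your ``standard hyperbolic estimate'' yields betweenness.
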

In particular, any lattice in $SO(n,1)$ is $\delta$-tripodic. However, non-uniform lattices in $SU(3,1)$ are not $\delta$-tripodic: a group that is $\delta$-tripodic must have sub-cubic Dehn function \cite{E}, and these lattices have cubic Dehn functions. Note that M.~Elder's proof in \cite{E} is purely metric, thus even though the result is formulated for word metrics and Dehn functions, it is also true for metrics quasi-isometric to word metrics and for metric generalisations of Dehn functions, as defined in \cite[5.F]{Gro}.   
\begin{proof}[Proof of Proposition \ref{relY}]
Assume that for every $i\in \{1,...,n\}$ the subgroup $H_i$ admits a geometric action on a (\effs) $\delta$-tripodic metric space $X_i$ (resp. roughly median). According to Section 4 of \cite{Groves} or \cite{Kar}, there is a metric space $Y$ on which $G$ acts geometrically and such that the $X_i$'s are isometrically embedded. Let us show that this metric space $Y$ is (\effs) $\alpha$-tripodic (res. roughly median) if the subspaces $X_i$ are. According to \cite{DrutuSapir:TreeGraded,Drutu}, relative hyperbolicity implies that there is a constant $\delta$ such that for any triangle $\Delta$, consisting of a triple of points $x,y,z$ in $G$ and (discrete) geodesics between those points, there exists a coset $gH_i$ such that if we denote by $x_1,x_2,y_1,y_2,z_1,z_2\in gH$ the entrance and exit points of the geodesics of $\Delta$ from this coset, we have that $d(x_1,x_2),d(y_1,y_2),d(z_1,z_2)\leq\delta$. Now, since $X_i$ is (\effs) $\delta_i$-tripodic (res. roughly median) and isometrically embedded in $Y$, there is a $\delta_i$-centre for the triple $x_1,y_1,z_1$, which will work for the triple $x,y,z$ as well.
\end{proof}
The geodesical versions of $\delta$-tripodic (resp $\delta$-median) are a little more subtle as one needs to control where the actual geodesics travel, and are left to an interested reader. It is straightforward to check that products of geodesic hyperbolic spaces are roughly geodesically median. There has been a lot of lively research recently around the question of the existence of an (equivariant) embedding of a group into a finite product of hyperbolic spaces. This started with the work of Bestvina, Bromberg, Fujiwara \cite{BBF-IHES}, and later Bestvina, Bromberg, Fujiwara and Sisto \cite{BBFS}, who introduced a set of axioms allowing to construct such an embedding, and applied it, for instance, to mapping class groups. Hagen and Petyt later proved that, at least in the case of mapping class groups, this embedding endows the group with a structure of $\delta$-median space \cite{HP21}. After that, Bestvina, Bromberg and Fujiwara showed that in certain cases (e.g. for mapping class groups) the construction can be adapted to yield an equivariant quasi-isometric embedding into a finite product of quasi-trees. This construction has been proven to induce a roughly median structure on mapping class groups by Petyt \cite{Petyt} (but it is unclear if it can be obtained using word metrics). 
\begin{prop}[Petyt]\label{Petyt}
    Mapping class groups are roughly median.
\end{prop}
\begin{proof}
    Let $G$ be a mapping class group, that acts on a product of quasi-trees as provided by \cite{BBF}. According to Proposition 3.2 of \cite{Petyt}, under the orbit map, any hierarchy path in the finite index, colour-preserving subgroup of $G$, turns into a quasi-geodesic in each of the quasi-trees. It remains to see that the orbit is coarsely median-preserving. This is the content of the first two paragraphs of the proof of Theorem A, just below the statement of Proposition 3.2 on p.12, which shows that the median of 3 points in $G$ in lies on a triangle of hierarchy paths, which turns into triangles of quasi-geodesics in the quasi-trees.
\end{proof}
Alternatively, one can also use Proposition 3.9 of \cite{HP21}, which embeds the mapping class group in a product of hyperbolic spaces with coarsely median-preserving orbits (with the same argument).
\section{The median space associated to a space with measured walls.}\label{sec:medianization}
From \cite{HaPa}, we recall that a \emph{wall} of a set $X$ is a partition $X=h\sqcup h^c$ (where {$h$ is possibly empty or the whole $X$}).  A collection $\hh$ of subsets of $X$
is called a \emph{collection of half-spaces} if for every $h\in \hh$ the complementary subset $h^c$ is
also in $\hh$. We call \emph{collection of walls} on $X$ the collection $\ww_\hh$ of pairs $w=\{h,h^c
\}$ with $h\in \hh$. For a wall $w=\{h,h^c\}$ we call $h$ and $h^c$  {\em the two half-spaces bounding
$w$}. Another wall $w'=\{h',{h'}^{c}\}$ is said to \emph{intersect} $w=\{h,h^c\}$ if each half-space bounding $w$ intersects non-trivially each half-space bounding $w'$, and the walls $w,w'$ are \emph{parallel} otherwise. We say that a wall $w=\{h,h^c \}$ {\em separates} two disjoint subsets $A,B$ in $X$ if $A\subseteq h$ and
$B\subseteq h^c$ or vice-versa and denote by $\ww (A| B)$ the set of walls separating $A$ and $B$. In
particular $\ww (A |\emptyset)$ is the set of walls $w=\{h,h^c \}$ such that $A\subseteq h$ or $A\subseteq
h^c$; hence $\ww (\emptyset | \emptyset )=\ww$. We use the notation $\ww (x|y)$ to designate $\ww (\{x\}|\{y\})$.
\begin{defn}[Space with measured walls \cite{CherixMartinValette}]\label{defn:spacemw}
A \emph{space with measured walls} is a set $X$, with $\mathcal{W}$ a collection of walls, $\mathcal{B}$ a $\sigma$-algebra of subsets of $\mathcal{W}$
and $\mu$ a measure on $\mathcal{B}$, such that for every two points $x,y\in X$ the set of
separating walls $\ww (x| y)$ is in $\bb$ and has finite measure. We denote by $\pdist_\mu$ the
pseudo-metric on $X$ defined by 
$$\pdist_\mu (x,y)=\mu \left( \ww (x| y) \right),$$ 
and we call it the
\emph{wall pseudo-metric}.
\end{defn}
\begin{rem}
Consider the set $\hh$ of half-spaces determined by $ \ww$, and the natural projection map $\fp :\hh
\to \ww$, $h\mapsto \{h,h^c \}$. The pre-images of the sets in $\mathcal{B}$ define a $\sigma$-algebra
on $\hh$, {which we still (abusing notations) denote by $\bb$}; hence on $\hh$ can be defined a pull-back measure that we also denote by $\mu$. This allows us to work either in $\hh$ or in $\mathcal{W}$.
\end{rem}
For the definition of homomorphisms of spaces with measured walls we use a slightly modified terminology, in accordance with the one in \cite{F1,F2}.
\begin{defn}\label{defn:hommesw}
Let $(X,\ww , \mu)$ and $(X',\ww' ,\mu')$ be two spaces with measured walls. A map $\phi:X\to X'$ is a \emph{homomorphism of spaces with measured walls} if:
\begin{itemize}
\item for any $w'=\{h',h'^c\}\in\ww'$ we have $\{\phi^{-1}(h'),\phi^{-1}(h'^c)\}\in\ww$;
this latter wall we denote by $\phi^*(w')$;
\item the map $\phi^* : \ww' \to \ww$ is measurable and $(\phi^*)_*\mu'=\mu$.
\end{itemize}
We say that $\phi$ is a \emph{monomorphism of spaces with measured walls} if $\phi^*$ is surjective, and that $\phi$ is a \emph{coarsely surjective monomorphism} if moreover there exists $D>0$ such that $X'$ is contained in the closed $D$-neighbourhood of $\phi (X)$, and for every half-space $h$ of $X$ there exists a half-space $h'$ of $X'$ at Hausdorff distance less than $D$ of $\phi (h)$. Both neighbourhoods are considered with respect to the wall pseudo-metric $\pdist_{\mu'}$.
\end{defn}
Note that being a monomorphism does not necessarily imply injectivity. On the other hand, being a homomorphism already does imply that the map is an isometric embedding with respect to  the wall pseudo-distances.
\begin{exmp}\label{euclidean}The plane $\reals^2$ with the Euclidean distance can be isometrically embedded in a median space, \textit{via} the map
\begin{eqnarray*}\reals^2&\to & L^1([0,2\pi])\\
\left(\begin{array}{c}x\\y\end{array}\right)&\mapsto& {\frac{1} {4}}\left(x\sin t+y\cos t\right).
\end{eqnarray*}
\end{exmp}
\begin{exmp}\label{ex:measwallsLp}
As mentioned in the introduction, a median metric space is endowed with a structure of convex measured walls, such that the wall pseudo-metric $\pdist_\mu$ coincides with the median metric, and isometries are automorphisms of the space with measured walls \cite[Section 5]{CDH-adv}. Thus, every $L^1$-space has a measured walls space structure, and every $L^p$-space, with $p\in (1,2]$ (in particular, every Hilbert space),  has a measured walls space structure, since they all embed isometrically in an $L^1$-space \cite{WellsWilliams:Embeddings}.  
\end{exmp}
\begin{exmp}[Finite dimensional real hyperbolic space]\label{exmp:realhyper} 
Define the half-spaces of the  real hyperbolic space $\field H^n$ to be closed or open geometric
half-spaces, so that the boundary of half-spaces is an isometric copy of $\field H^{n-1}$ (a geometric
hyperplane of $\field H^n$). More precisely, if we use the model of one sheet of a hyperboloid, 
$${\mathcal H}^n=\{ (x_1, \dots , x_{n+1}) \in \R^{n+1} \mid x_{n+1}^2 - \sum_{i=1}^n x_i^2 =1, x_{n+1}>0 \}$$ 
for $\field H^n$, then a closed half-space $h$ is defined by $x_1\geq 0$, with boundary hyperplane $\partial h$ defined by the equation $x_1=0$. The connected component of the identity, $SO_I(n,1)$, of the group stabilizing the form $x_{n+1}^2 - \sum_{i=1}^n x_i^2$, acts transitively on the set of unit tangent vectors of  $\field H^n$ identified with ${\mathcal H}^n$, hence all closed half-spaces are in the same orbit. The stabilizer of the half-space $h$ can be identified with the stabilizer of the vector $(1,0,\dots ,0)^t$ in $SO_I(n,1)$, thus with $SO_I(n-1,1)$ (identified with the subgroup of $SO_I(n,1)$ with $1$ in the upper left corner and all the other entries of the first row and the first column zero). Thus, we can identify the set of closed half-spaces with $SO(n,1)/SO(n-1,1)$.  

An element in the stabilizer of the hyperplane $x_1=0$ can either  fix $h$ or swap it with the opposite closed half-space, as it contains for instance the symmetry represented by the diagonal matrix $\sigma $ with diagonal $(-1, -1, 1,\dots ,1)$. Thus, the stabilizer of the hyperplane $\partial h$ is the semidirect product of $SO_I(n-1,1)$ with $\mathbb Z_2 = \{ {\mathrm id}, \sigma \}$. The set of hyperplanes can therefore be identified with $SO(n,1)/[SO(n-1,1)\rtimes \mathbb Z_2]$. 

The set of walls is of the form $\{ h, h^c \}$, with $h$ a closed half-space and $h^c$ the complementary open half-space, and can therefore be identified with the set of closed half-spaces, hence with $SO(n,1)/SO(n-1,1)$. Since the stabilizer of a wall/a closed half-space is unimodular, there is a $SO(n,1)$-invariant borelian measure $\mu_{\field H^n}$ on the set of walls. The set
of walls separating two points has a compact closure, therefore it has finite measure and thus $(\field
H^n,\ww_{\field H^n},\mu_{\field H^n})$ is a space with measured walls. By Crofton's formula \cite[Proposition 3]{CherixMartinValette}, up to multiplying the measure $\mu_{\field H^n}$ by some
positive constant, the wall pseudo-metric on $\field H^n$ coincides with the usual hyperbolic distance. The case of the infinite dimensional hyperbolic space seems more complicated, see Remark \ref{infdim}.\end{exmp}
We now recall how a space with measured walls naturally embeds in a median space. This is done in detail in \cite{CDH-adv,F1,F2} and we just give the outline here.
\begin{defn}Given a space with measured walls $(X,\ww , \mu)$, a section $\mathfrak{s}$ for the projection $\fp :\hh\to \ww$ is called \emph{admissible} if its image contains, together with a half-space $h$, all the half-spaces $h'$ containing $h$. We denote by $\overline{\mathcal M}(X)$ the set of admissible sections.
\end{defn}
\begin{rem}We identify an admissible section  $\fs$ with its image $\sigma=\fs (\ww )$; with this identification, an admissible section becomes a collection of half-spaces, $\sigma$, such that:
\begin{itemize}
    \item  for every wall $w = \{h,h^c\}$ either $h$ or $h^c$ is in $\sigma $, but never both;
    \item  if $h\subset h'$ and $h\in \sigma$ then $h'\in \sigma$.
\end{itemize}
Such a collection of half-spaces is commonly called {\it ultrafilter} in the literature on median spaces.\end{rem}
For any $x\in X$ we denote by $\sigma_x$ the image of the section of $\fp$ associating to each wall $\{h,h^c\}$ the half-space containing $x$. That is, $\sigma_x$ is the set of half-spaces $h\in\hh$ such that $x\in h$. This is an admissible section and $$\fp(\sigma_x\vartriangle\sigma_y)=\ww (x| y).$$

Let now $x_0$ denote some base point in $X$. We define 
$${\bb_0}:=\{A\subseteq\hh\hbox{ such that }A\vartriangle \sigma_{x_0}\in\bb\hbox{ and }\mu(A\vartriangle \sigma_{x_0})<+\infty\},$$
and endowed with the following map
$$\pdist_\mu(A,B)=\mu(A\vartriangle B),$$ 
the set ${\bb_0}$ becomes a median pseudo-metric space. Indeed, the identity $A\vartriangle \sigma_{x_1}=(A\vartriangle \sigma_{x_0})\vartriangle
(\sigma_{x_0}\vartriangle \sigma_{x_1})$ proves the triangle inequality, and the fact that $\sigma_{x_0}\vartriangle \sigma_{x_1}$ is
measurable with finite measure shows that the median pseudo-metric space ${\bb_0}$ is independent of the chosen base point $x_0$. In particular, $\sigma_x\in {\bb_0}$ for any $x\in X$ and the elements in $\overline{\mathcal M}(X)\setminus{\bb_0}$ are the elements at infinite distance from the sections coming from the elements of $X$, and form the Roller boundary (that we will not discuss here). The map
\begin{equation}\label{chixo}
\chi^{x_0}:{\bb_0}\to {\mathcal S}^1(\hh,\mu),\, \,  \chi^{x_0}
(A)=\chi_{A\vartriangle \sigma_{x_0}}
\end{equation}
 is an isometric embedding of ${\bb_0}$ into the median subspace
${\mathcal S}^1(\hh,\mu)\subseteq {\mathcal L}^1(\hh,\mu)$, where ${\mathcal S}^1(\hh ,\mu)=\{\chi_B\,|$
$B$ is measurable and $\mu(B)<+\infty\}$. Notice that for $x,y\in X$ we have $\pdist_\mu(x,y)=\mu(\sigma_x\vartriangle\sigma_y)$, thus $x\mapsto
\sigma_x$ is an isometric embedding of $X$ into $({\bb_0},\pdist_\mu)$.
\begin{defn}\label{def:mx}
The {\it median space associated to }$X$ is the metric space ${\mathcal M}(X)$ obtained as the separation of the pseudo-metric space 
$$\overline{\mathcal M}(X)\cap {\bb_0}.$$
Since each admissible section $\sigma_x$ belongs to ${\mathcal M}(X)$, it follows that $X$ isometrically
embeds in ${\mathcal M}(X)$. We will denote by $\iota:X\to\mm(X)$ this isometric embedding. Given two elements $\tau,\tau'$ in ${\mathcal M}(X)$, we denote 
$$\ww(\tau | \tau')=\{w=\{ h,h^c\} \in\ww\,|\, h\in\tau, h^c\in\tau'\}=\fp(\tau\vartriangle\tau').$$
\end{defn}
The following result emphasizes that the construction of ${\mathcal M}(X)$ is the right one.
\begin{prop}[Proposition 3.14, \cite{CDH-adv}]\label{prop:medianization}
The space ${\mathcal M}(X)$ is a median subspace of $\widehat{{\bb}_0}$. Let $(X',\ww')$ be another space with measured walls. Any homomorphism of spaces with measured walls $\phi :X \to X'$
induces an isometric embedding $\mm(X)\to\mm(X')$. In particular the group of automorphisms of $(X,\ww)$
acts by isometries on ${\mathcal M}(X)$.
\end{prop}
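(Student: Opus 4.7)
My plan is to establish the three claims in turn. For the first, I would use that $(\bb^\hh,\pdist_\mu)$ is a median pseudo-metric space with median of a triple $A_1,A_2,A_3$ given by the majority set $m(A_1,A_2,A_3)=(A_1\cap A_2)\cup(A_2\cap A_3)\cup(A_1\cap A_3)$, and check that when $A_i=\sigma_i \in \mm(X)$ the majority $m$ stays in $\overline{\mm}(X)\cap\bb^\hh$. Admissibility of $m$ follows from two easy pigeonhole observations: for each wall $\{h,h^c\}$ the three admissible sections force exactly one of $h,h^c$ into the majority; and if $h\subset h'$ with $h\in m$, then at least two of the $\sigma_i$ contain $h$, hence (by their own admissibility) also $h'$. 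Membership in $\bb^\hh$ reduces to the set-theoretic inclusion
\[
m\vartriangle \sigma_{x_0}\subseteq\bigcup_{i=1}^{3}\bigl(\sigma_i\vartriangle\sigma_{x_0}\bigr),
\]
verified by a short case analysis on the majority rule, giving $\mu(m\vartriangle\sigma_{x_0})<\infty$.

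For the second claim, given a monomorphism $\phi:X\to X'$, I would define the induced map on admissible sections by
\[
\tilde\phi(\sigma):=\{\,h'\in\hh':\phi^{-1}(h')\in\sigma\,\}.
\]
Admissibility of $\tilde\phi(\sigma)$ is immediate: for each wall $w'=\{h',h'^c\}$ of $X'$ the pair $\{\phi^{-1}(h'),\phi^{-1}(h'^c)\}=\phi^*(w')$ is a wall of $X$, on which the admissibility of $\sigma$ provides the required selection; inclusions $h'\subset g'$ pull back to $\phi^{-1}(h')\subset\phi^{-1}(g')$. A direct computation gives $\tilde\phi(\sigma_{x_0})=\sigma_{\phi(x_0)}$, so taking $\phi(x_0)$ as basepoint in $X'$ reduces both the $\bb^{\hh'}$-condition and the isometry statement to the pullback identity
\[
\tilde\phi(\sigma_1)\vartriangle\tilde\phi(\sigma_2)=\{\,h'\in\hh':\phi^{-1}(h')\in\sigma_1\vartriangle\sigma_2\,\}
\]
combined with the pushforward axiom $(\phi^*)_*\mu'=\mu$, which yields $\mu'\bigl(\tilde\phi(\sigma_1)\vartriangle\tilde\phi(\sigma_2)\bigr)=\mu(\sigma_1\vartriangle\sigma_2)$. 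Injectivity of $\tilde\phi$ follows from the surjectivity of $\phi^*$: every $h\in\hh$ arises as $\phi^{-1}(h')$ for some $h'\in\hh'$, so equal preimages of two admissible sections force the sections themselves to agree.

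The third claim is then essentially formal. By construction the assignment $\phi\mapsto\tilde\phi$ is functorial, $\widetilde{\phi\circ\psi}=\tilde\phi\circ\tilde\psi$, and $\widetilde{\mathrm{id}_X}$ is the identity of $\mm(X)$; applied to an automorphism $\phi$ and its inverse $\phi^{-1}$ this shows that $\tilde\phi$ and $\widetilde{\phi^{-1}}$ are mutually inverse isometric embeddings, hence $\Aut(X,\ww)$ acts by isometries on $\mm(X)$. The genuine content of the plan lies in the set-theoretic inclusion in the first paragraph and in the pullback symmetric-difference identity in the second; once these are established, all remaining steps are direct applications of the definitions of admissibility and of homomorphism of spaces with measured walls.
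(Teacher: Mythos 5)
Your proposal is correct, and it follows essentially the same route as the source of this statement: the present paper does not reprove the proposition but imports it from \cite{CDH-adv}, and the argument there is the one you outline, namely closure of admissible sections under the pointwise majority (median) operation in ${\bb^\hh}$, and the pullback map $\sigma\mapsto\{h'\in\hh':\phi^{-1}(h')\in\sigma\}$ on admissible sections, with the isometry statement coming from the identity $\tilde\phi(\sigma_1)\vartriangle\tilde\phi(\sigma_2)=\fp'^{-1}\bigl((\phi^*)^{-1}(\fp(\sigma_1\vartriangle\sigma_2))\bigr)$ and the axiom $(\phi^*)_*\mu'=\mu$; functoriality then handles the automorphism group. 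One point to tighten: membership of the majority set $m$ in ${\bb^\hh}$ requires not only $\mu(m\vartriangle\sigma_{x_0})<\infty$ but also that $m\vartriangle\sigma_{x_0}$ be measurable, and your inclusion $m\vartriangle\sigma_{x_0}\subseteq\bigcup_{i}(\sigma_i\vartriangle\sigma_{x_0})$ only yields the finiteness, since a subset of a measurable set need not be measurable. This is easily repaired by the same majority rule: the majority operation commutes with taking the symmetric difference with a fixed set, so $m\vartriangle\sigma_{x_0}$ equals the majority of the three measurable sets $\sigma_i\vartriangle\sigma_{x_0}$, hence is a finite Boolean combination of measurable sets (and likewise wall-saturated), which gives measurability; with this added, and noting that injectivity of $\tilde\phi$ is a bonus not required by the paper's notion of isometric embedding, all your steps go through.
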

\begin{rem}\label{rem:espaceM0}
To all intents and purposes, the space $\mm (X)$ can be replaced by $\mm_0 (X)$, the metric completion of the median closure of $X$ in  $\mm (X)$. The space $\mm_0 (X)$ may in general be different from  $\mm (X)$, but is equal to it when the space $\mm_0 (X)$ is locally convex \cite{F2}.  

All the results in \cite{CDH-adv} and in this paper that are formulated for $\mm (X)$ also hold for $\mm_0 (X)$. Moreover $\mm_0 (X)$ has the advantage of being a complete geodesic metric median space when $X$ is connected. Indeed, according to Bowditch \cite{Bow16}, a complete median space is geodesic if and only if it is connected. The median map is $1$--Lipschitz, therefore the image of $X$ by it in $\mm (X)$ is connected, and as the median completion of $X$ in  $\mm (X)$ equals the increasing union of (connected) sets obtained by iterative applications of the median map to $X$, it is itself connected. The space $\mm_0 (X)$ is the metric completion of this latter median completion, hence it is itself connected.
\end{rem}
%
\section{Distances to the associated median space.}\label{sec:Hdist}
\begin{notation}\label{Nwallssubsets}
{Let $(X,\ww , \mu )$ be a space with measured walls. For any subset $Y\subseteq X$, we denote by
${\mathcal W}(Y)$ the set of walls separating two points of $Y$ (we also say that these walls \emph{cut} $Y$). We denote by $\hh(Y)=\fp^{-1}(\ww(Y))$ the corresponding set of half-spaces. The sets ${\mathcal W}(Y)$ and $\hh(Y)$ are not \textit{a priori} measurable, unless $Y$ is countable for instance, in which case ${\mathcal W}(Y) =
\bigcup_{y,y'\in Y} \ww (y|y')$.} We write $\overline{\mu}({\mathcal W}(Y))\le K$ if for any measurable subset ${\mathcal E}\subseteq {\mathcal W}(Y)$ we have that $\mu({\mathcal W}({\mathcal E} ))\le K$, hence we may talk about a priori non-measurable sets having bounded measure.
\end{notation}
In this section we investigate under what circumstances a space with measured walls is within bounded Hausdorff distance of its associated median space, or a complete median space in general. Such a measured wall space would have to satisfy metric properties similar to those of a median space, up to bounded perturbation. We briefly recall here a few more properties of median spaces, as discussed in \cite{CDH-adv}, ending with the property that will be central to our approach.

\begin{defn}\label{def:proj}
Given a metric space $(X, \dist )$, a subset $A$ and a point $x$ in $X$, an {\em $\varepsilon$-projection $p$ of $x$ on $A$} is a point $p\in A$ such that $\pdist (x,p)< \pdist (x,A)+\varepsilon$.
A {\emph{gate}} between a point $x$ and a subset $Y$ of $X$ is a point $p\in Y$ that is between $x$ and any point $y\in Y$. A subset $Y$ is \emph{gate convex} if every point $x\in X\setminus Y$ has a gate.
\end{defn}
Every gate convex set is closed and convex. The converse is true if $X$ is a complete median space. 
\emph{In particular, in a complete median space the closure of every half-space is gate convex.} This is the property that we will quasify and use to discuss the finiteness of the Hausdorff distance of a measured wall space to its associated median space. 
\begin{defn}
A measured wall space $(X,\ww , \mu)$ is said to have \emph{quasi-gated half-spaces} if there are constants 
$\epsilon, K\geq 0$ such that every half-space $h$ of $X$ is $(\epsilon, K)$-\emph{gated}, namely, for every point $x$ outside $h$, and every $\epsilon$-projection $p$ of $x$ on $h$ (for the wall pseudo-distance), the set of walls separating $x$ from $p$ and intersecting $h$ has measure bounded by $K$.\end{defn}
\begin{rem}
The Euclidean plane $({\R}^2,\ell^2)$ does not have quasi-gated half spaces. Indeed, any straight line (or hyperplane) corresponds to a wall, and half-spaces are given by a side $h$ of that hyperplane. For a point $x\in h^c$, any $\epsilon$-projection $p$ has to be in the $\epsilon$-ball centered around the orthogonal projection of $x$ on the hyperplane $\bar{h}$. But the measure of the hyperplanes separating $x$ from $p$ and intersecting $\bar{h}$ is the same as the one of {\it all} hyperplanes separating $x$ from $p$, since the ones not intersecting $\bar{h}$ are the ones parallel to $\bar{h}$, which is a measure 0 set of hyperplanes. Hence that measure is almost equal to the distance between $x$ and $p$, so cannot be bounded by any constant independent of the distance between $x$ and $p$. This phenomena doesn't happen in the hyperbolic plane: there walls correspond to bi-infinite geodesics, and given a point $x$ and a wall $\bar{h}$, again any $\epsilon$-projection is in the $\epsilon$-ball around the nearest point projection of $x$ onto $\bar{h}$, but now if $x$ is far enough from $\bar{h}$, most geodesic passing near $x$ will fail to intersect $\bar{h}$, only the ones close enough to that $\epsilon$-projection will (see Figure \ref{fig:notgated}).
\begin{figure}
    \centering
    \includegraphics[width=0.5\textwidth]{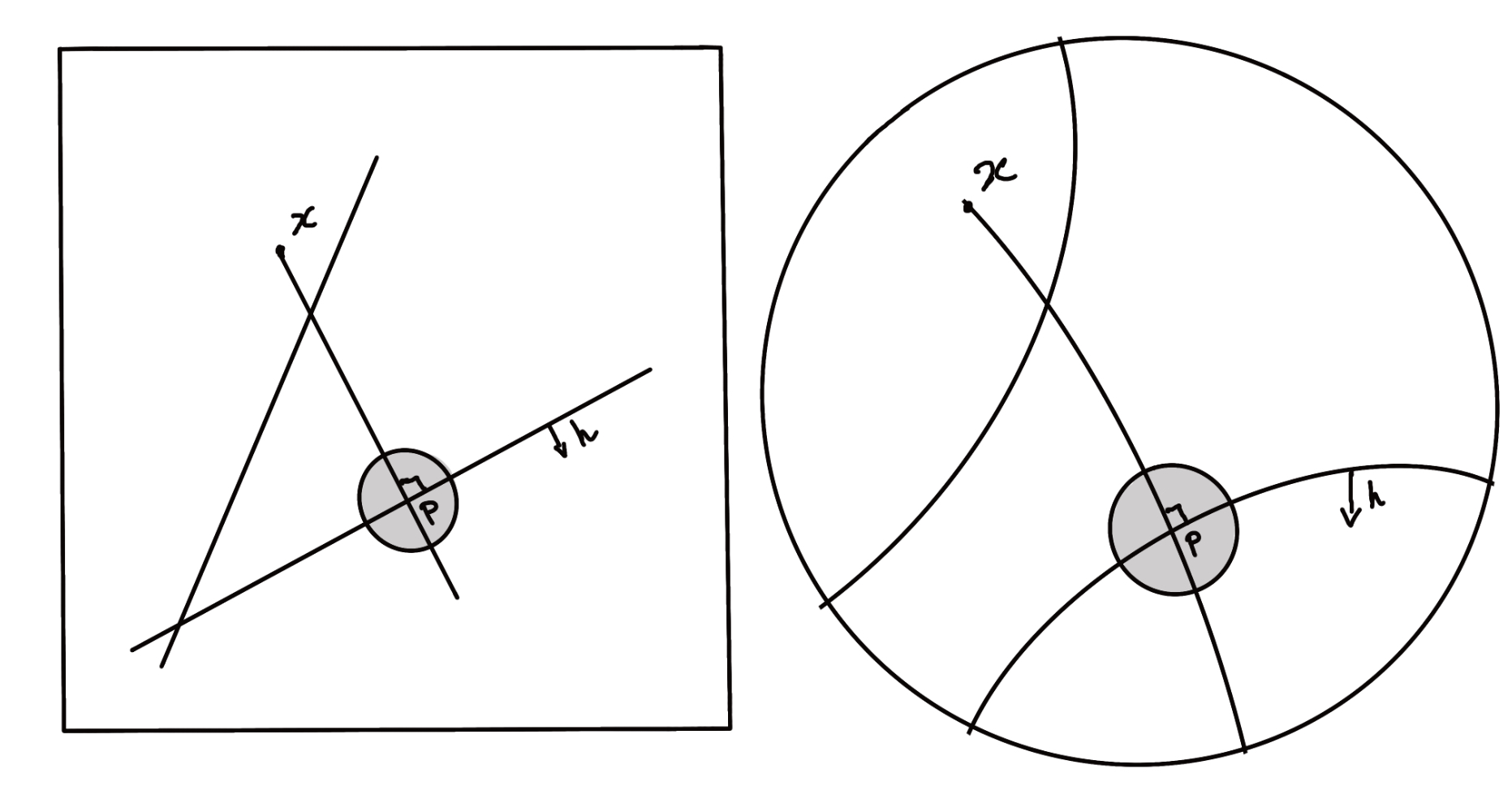}
    \caption{The Euclidean plane with most walls close to $x$ and crossing $h$, versus the hyperbolic plane where only hyperplanes near $p$ can cross $h$}
    \label{fig:notgated}
\end{figure}
\end{rem}
A consequence of the property that half-spaces are quasi-gated is a property of quasi-convexity of walls. The latter quasi-convexity property is slightly different from the one introduced in \cite{CDH-adv}, where we defined \emph{quasi-convex sets} to be sets $Y$ such that for every $a,b\in Y$, $I(a,b)\subseteq \onn_M (Y)$ for some uniform $M\geq 0$. Here, we extend the notion of quasi-convexity as follows.
\begin{defn}\label{quasi-conv}
Given a metric space $(X, \dist )$, and constants $\delta, M\geq 0$, a subset $Y\subseteq X$ is called $(\delta,M)$-\emph{quasi-convex} if, for every $a,b\in Y$, then
$$I_\delta (a,b)\subseteq \onn_M (Y).$$
\end{defn}
With this terminology, we obtain the following.
\begin{lem}\label{lem:smball3} Let $(X,{\mathcal W},\mu )$ be a measured wall space with quasi-gated half-spaces. Let $\epsilon, K\geq 0$ be constants such that all its half-spaces are $(\epsilon, K)$-gated.

Then, for every $\delta \geq 0$, all the half-spaces of $X$ are $(\delta , K+\delta )$-quasi-convex.
\end{lem}
\begin{proof}
Let $\delta \geq 0$, $x,y$ two distinct points in a half-space $h$ and $a\in I_\delta (x,y)$. Let $p$ be an $\epsilon$-projection of $a$ onto $h$. Then $\ww (a|p) \setminus \ww (a|h)$ has measure at most $K$.

On the other hand
$$\ww (a|h) \sqcup \ww (x|y) \subseteq \ww (a|x) \cup \ww (a|y),$$ whence 
$$\mu (\ww (a|h))+ \pdist_\mu (x,y)\leq \pdist_\mu (x,a) + \pdist_\mu (a,y) \Rightarrow \mu (\ww (a|h)) \leq \delta .$$
It follows that $\pdist_\mu (a,p)\leq K +\delta $.
\end{proof}
\begin{rem}\label{closedwallsQC}If we don't assume gated or quasi-gated, but we know instead that for any point $x\in X$ and any half-space $h\in\hh$ not containing the point $x$, the set $\ww(x|h)$ is measurable and its measure is equal to the pseudo-distance $\pdist_\mu (x,h)$ between $x$ and $h$, then the half-spaces are convex with respect to $\pdist_\mu$, in the sense that if $a$ and $b$ are two points in a half-space $h$ and $x\in I(a,b)$, then $\pdist_\mu (x,h) = \mu(\ww(x|h))=0$. Indeed, $x\in I(a,b)$ satisfies by definition that $\pdist_\mu (a,x) + \pdist_\mu (x,b) = \pdist_\mu (a,b)$. Hence, since in general $\pdist_\mu (a,b) = \mu (\ww (a,x\mid b))+ \mu (\ww (a\mid x, b))$, we have
$$2\mu(\ww(x|h))+\pdist_\mu (a,b)=\underbrace{\mu (\ww (a,x\mid b)) + \mu (\ww (x\mid h))}_{\mu(\ww (x\mid b))} +\underbrace{ \mu (\ww (a\mid x, b)) + \mu (\ww (x\mid h))}_{\mu(\ww (x\mid a))}$$
since $\ww (a,x\mid b)\sqcup\ww (x\mid h)\subseteq \ww (x\mid b)$, and similarly $\ww (a\mid x, b)\sqcup\ww (x\mid h)\subseteq \ww(x\mid a)$. Combining the above, we get $0=\pdist_\mu (a,x) + \pdist_\mu (x,b) - \pdist_\mu (a,b)\geq 2 \pdist_\mu (x,h)\geq 0$.  

The condition that $\pdist_\mu (x,h) = \mu(\ww(x|h))$ is satisfied for $\R^2$ when endowed with the $\ell^1$ metric, but not with the Euclidean metric, as in the latter case the hyperplanes are all the straight lines; we can have points arbitrarily far from a hyperplane, but the measure of the set of hyperplanes separating a point from another hyperplane is always 0, since this set can only contain hyperplanes of a fixed slope. \end{rem}
Having quasi-gated half-spaces also suffices to ensure that walls separating a point $x$ from a wall actually cut a small ball around $x$, which again is not true in the Euclidean plane.
\begin{lem}\label{lem:smball} Let $(X,\ww , \mu )$ be a measured wall space with the property that there are constants $\epsilon, K\geq 0$ such that all its half-spaces are $(\epsilon, K)$-\emph{gated}. 

Then for every $\tau\in\mm(X)$, and $x\in X$, $\epsilon$-projection of $\tau$ onto $X\subseteq \mm(X)$, each wall in $\ww (x\vert \tau)$ cuts the ball $\bar{B}(x,2K+\epsilon)\subseteq X$.
\end{lem}
\begin{proof}
Take $w=\{h,h^c\}\in\ww (x\vert \tau)$, so that $x\in h$ and $h^c\in \tau$. Let $p\in h^c$ be a point
such that $\pdist_\mu (p,x)\le \pdist_\mu (h^c,x)+\epsilon$. By assumption, we have that
$${\mu}({\mathcal W}(x\vert p)\setminus {\mathcal W}(x\vert h^c))\leq K\, .$$
We can write
$$
\pdist_\mu (\tau , \sigma_p)=\mu ((\tau \vartriangle \sigma_x)\vartriangle (\sigma_p \vartriangle
\sigma_x))= \mu (\ww (x|\tau ) \setminus \ww(x|p) )+ \mu (\ww (x|p)\setminus \ww (x|\tau ))\, .
$$
By the admissibility of $\tau$, $\ww (x|h^c)\subseteq \ww (x|\tau)$. It follows that $\ww (x|p)\setminus \ww (x|\tau ) \subseteq \ww (x|p)\setminus \ww (x| h^c )$, so that, using the assumption once again, we have $\mu(\ww (x|p)\setminus \ww (x|\tau ))\leq K$.

On the other hand 
\begin{eqnarray*}\mu \left(\ww (x|\tau ) \setminus \ww(x|p) \right) &=&\pdist_\mu (\tau , \sigma_x)-
\mu \left( \ww (x|\tau ) \cap \ww(x|p) \right)\\
&\leq &\pdist_\mu (\tau , \iota (X))+\epsilon- \mu \left( \ww
(x|h^c) \cap \ww(x|p) \right).\end{eqnarray*}
Now $\mu \left( \ww (x|h^c) \cap \ww(x|p) \right) = \mu (\ww (x|p))- \mu \left( \ww (x|p ) \setminus
\ww(x|h^c) \right)$, which by assumption is larger than $\pdist_\mu (x,p)- K$.
Combining all, we get that 
$$\pdist_\mu (\tau , \sigma_p)\leq K+\pdist_\mu (\tau , \iota (X))+\epsilon -\pdist_\mu (x,p)+K,$$
hence that $\pdist_\mu (x,p)\leq 2K + \epsilon$, as desired.
\end{proof}
This result is a quasification of a property that median spaces have, that {\it closed walls are gate convex}. It will suffice to conclude that $X$ is within bounded distance from $\mathcal{M}(X)$, provided that the set of walls intersecting a ball has finite measure. This is the condition that we introduce next.
\begin{defn}\label{defn:mulocfinite} A wall space $(X,{\mathcal W},\mu )$ is called \emph{$\mu$-locally finite} if for every $R>0$, there exists $M=M(R)\geq 0$ such that $\overline{\mu}({\mathcal
W}(B(x,R)))\le M<+\infty$ for every $x\in X$, where the open ball $B(x,R)$ is with respect to the wall metric $\pdist_\mu $.
 When we want to avoid specifying the measure $\mu$ and there is no risk of confusion, we simply say that the measured wall space is \emph{measurably locally finite}. 
\end{defn}
\begin{exmp}
\ 
\begin{enumerate}\item The measured wall space structures on $\R^n$ endowed with either the Euclidean norm or the $\ell^1$--norm are measurably locally finite, as is the structure on the real hyperbolic space ${\field H}^n$. 
\item Discrete wall spaces whose wall distance is uniformly proper are $\mu$-locally finite.
\item More generally, if for each ball we have a covering
$${\mathcal W}(B(x,R))\subseteq {\mathcal W}(x\vert a_1)\cup \dots\cup {\mathcal W}(x\vert a_{n(R)}),$$
with $d(x,a_i)\le g(R)$, then the measured wall space is measurably locally finite.\end{enumerate}
\end{exmp}
Lemma \ref{lem:smball} implies the following.
\begin{lem}\label{lem:smball2} Let $(X,{\mathcal W},\mu )$ be a measured wall space with quasi-gated half-spaces and $\mu$-locally finite. Then $\mm (X)$ is within Hausdorff distance from $\iota (X)$, and in particular is roughly median.

Precisely, if there are constants $\epsilon, K\geq 0$ such that all half-spaces are $(\epsilon, K)$-gated, then $\mm (X)$ is within Hausdorff distance at most 
$$\delta=\delta(K,\epsilon):=\bar{\mu} (\ww (\bar{B}(x, 2K+\epsilon))$$ 
from $\iota (X)$, and hence $X$ is $\delta$-median (Definition \ref{dmedian}).
\end{lem}
To complete the characterization of measured wall spaces within finite distance of their associated median space, we have to consider a converse of the statement in Lemma \ref{lem:smball2}. 
\begin{rem}
The property alone that $\mm (X)$ is within bounded distance from $\iota (X)$ does not suffice to imply that half-spaces are quasi-gated. Example \ref{trou} shows this, since the space $X$ is within bounded Hausdorff distance of $\mm (X)=M$, but for the midpoints $(a_n,0)$ of the geodesics $[x_n, y_n]\times \{0\}$ in $X$, there exist no points $p_n$ in the half-space $h=T\times [1/2,1]\cap X$ such that $\ww (a_n,p_n) \setminus \ww (a_n,h)$ has uniformly bounded measure. Indeed, for every $p_n\in h$, the measure of $\ww (a_n,p_n) \setminus \ww (a_n,h)$ is at least $n/2 - 1$. The reason for this is that in this example half-spaces are not quasi-convex in the sense of Definition \ref{quasi-conv}. Quasi-convexity is therefore a condition independent of the "bounded distance from $\mm (X)$" condition and, as it is also a necessary condition for the half-spaces to be quasi-gated, it has to be added to obtain a converse. This is the subject of the next lemma.    
\end{rem}
 It is to be noted that, when the quasi-convexity of the half-spaces is required, the condition "within bounded distance from $\mm (X)$" can be weakened to "tripodic".  
\begin{lem}\label{lem:convex} Let $(X,{\mathcal W},\mu )$ be a measured wall space that is $\delta$-tripodic when endowed with the wall pseudo-metric, for some $\delta\geq 0$, and such that there is $M\geq 0$ such that all the half-spaces are $(2\delta , M)$-quasi-convex.

For every half-space $h$ of $X$, every point $x$ outside $h$, and every $\epsilon$-projection $p$ of $x$ on $h$, the walls separating $x$ from $p$ and intersecting $h$ must intersect $\bar{B}(p, 2\delta+M+\epsilon)$. Namely, for every $k\geq 2\delta+M+\epsilon$ then
$$\ww(x|p)\cap\ww(h)\subseteq\ww(\bar{B}(p,k)).$$
In particular, if $X$ is $\mu$-locally finite, then all its half-spaces are $(\epsilon, K)$-gated for $K\geq  \bar{\mu }(\bar{B}(p, 2\delta+M+\epsilon ))$.
\end{lem}
\begin{proof}Let $x$ be a point outside a half-space $h_0$, and let $p$ be a $\epsilon$-projection of $x$ on $h_0$. Let ${\mathcal E}$  be a measurable subset of $\mathcal W$ contained in ${\mathcal W}(x\vert
p)\setminus {\mathcal W}(x\vert h_0)$. For any wall $w=\{h,h^c\}\in {\mathcal E}$ assume the notation is
such that $x\in h$ (so $p\in h^c$). Since the wall $w$ does not separate $x$ from $h_0$, the intersection
$h\cap h_0$ is not empty. Take $q\in h\cap h_0$. As $X$ is $\delta$-tripodic, there exists a point $m$ that is $2\delta$--between any two of the three points $x,p,q$. In particular, since the walls are $(2\delta , M)$-quasi-convex, $m\in I_{2\delta} (x,q)\subseteq \onn_M (h)$, and $m\in I_{2\delta}(p,q)\subseteq \onn_{M}(h_0)$ so 
$$\pdist(x,m)\geq \pdist(x,h_0)-M\geq\pdist(x,p)-\epsilon-M$$ 
hence $\pdist(x,p)-\pdist(x,m)\leq M+\epsilon$. Finally, from $m\in I_{2\delta}(x,p)$ we compute
$$\pdist(p,m)\leq\pdist(p,x)-\pdist(m,x)+2\delta\leq 2\delta+M+\epsilon.$$
This shows that $w\in {\mathcal W}(\bar{B}(p,\epsilon +M +2\delta ))$. 
\end{proof}
Combining all the preceding lemmas, we obtain the following result, that immediately implies Theorem \ref{thm:main}.
\begin{thm}\label{thm:equiv}
Let $(X,\mathcal{W},\mu)$ be a space with measured walls that is $\mu$-locally finite (see Definition \ref{defn:mulocfinite}). The following are equivalent:
\begin{enumerate}
\item\label{Haus} there are constants $\delta, M\geq 0$ such that the Hausdorff distance from $\iota(X)$ to the associated median space $\mm (X)$ is at most $\delta$ and all the half-spaces of $X$ are $(2\delta , M)$-quasi-convex;
\item\label{HausGen} there are constants $\delta, M\geq 0$ such that there exists a median space ${\mathcal{M}}$ and a monomorphism $\varphi :X \to {\mathcal{M}}$ such that ${\mathcal{M}}$ is within Hausdorff distance at most $\delta$ from $\varphi (X)$, and all the half-spaces of $X$ are $(2\delta , M)$-quasi-convex;
\item\label{(3)} there exists $\delta$ and $M$ non-negative constants such that $X$ with the wall pseudo-metric is $\delta$-tripodic, and all the half-spaces of $X$ are $(2\delta , M)$-quasi-convex; 
%
\item\label{convex} the space $X$ has $(\epsilon ,K)$-gated half-spaces, for some $\epsilon$ and $K$ non-negative constants.
\end{enumerate}
\end{thm}
\begin{proof}
    That (\ref{Haus}) implies (\ref{HausGen}) is immediate, and that (\ref{HausGen}) implies (\ref{(3)}) follows from the fact that roughly median spaces are in particular tripodic. That (\ref{(3)}) implies (\ref{convex}) is the content of Lemma \ref{lem:convex}, and that (\ref{convex}) implies (\ref{Haus}) follows from the combination of Lemma \ref{lem:smball2} and Lemma \ref{lem:smball3}. 
\end{proof}

Note that the property of being at finite Hausdorff distance from its median space is not inherited by subsets, not even when they are geodesic. Indeed, every $L^2$ space can be embedded isometrically into an $L^1$ space \cite{WellsWilliams:Embeddings}. 

A particular case of the equivalence in Theorem \ref{thm:equiv} is the following.
\begin{thm}\label{thm:equiv2}
Let $(X,\mathcal{W},\mu)$ be a space with measured walls that is $\mu$-locally finite and has quasi-convex half-spaces. The following are equivalent:
\begin{enumerate}
\item\label{Haus2} the Hausdorff distance from $\iota(X)$ to the associated median space $\mm (X)$ is finite;
\item\label{HausGen2} there exists a median space ${\mathcal{M}}$ and a monomorphism $\varphi :X \to {\mathcal{M}}$ such that ${\mathcal{M}}$ is within finite Hausdorff distance from $\varphi (X)$;
\item\label{(3)-2} there exists $\delta$ such that the pseudo-metric space $(X, \pdist_\mu)$ is $\delta$-tripodic; 
%
\item\label{convex2} there are $\epsilon$ and $K$ non-negative constants such that the space $X$ has $(\epsilon ,K)$-gated half-spaces.
\end{enumerate}
\end{thm}
\begin{cor}\label{cor:yd}
A space with measured walls that is $\mu$--locally finite and \eff $\delta$-tripodic (e.g. geodesic Gromov hyperbolic or finite product of geodesic Gromov hyperbolic spaces) with respect to the distance $\pdist_\mu$ is at finite Hausdorff distance of its associated median space. 
\end{cor}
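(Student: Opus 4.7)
The plan is to invoke Theorem \ref{thm:equiv}, verifying the equivalent condition \eqref{(3)}. The $\delta$-tripodal requirement is part of the hypothesis; what must be produced is a constant $D$ such that any $z$ that is $\delta$-between two points $x, y$ of a half-space $h$ lies in the $D$-neighborhood of $h$ with respect to $\pdist_\mu$.

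First I extract a measure bound. For a triple of points $x, y, z$ partition the walls (up to the trivial ones) into the three disjoint classes $A = \ww(\{x\}|\{y,z\})$, $B = \ww(\{y\}|\{x,z\})$, $C = \ww(\{z\}|\{x,y\})$, consisting of those walls that isolate one of the three points from the other two. Then $\pdist_\mu(x,y)=\mu(A)+\mu(B)$, $\pdist_\mu(x,z)=\mu(A)+\mu(C)$, $\pdist_\mu(z,y)=\mu(B)+\mu(C)$, so
$$
\pdist_\mu(x,z) + \pdist_\mu(z,y) - \pdist_\mu(x,y) = 2\mu(C).
$$
Hence $z$ being $\delta$-between $x,y$ forces $\mu(C)\leq \delta/2$. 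Since $x,y\in h$, every wall separating $z$ from all of $h$ lies in $C$, so $\mu(\ww(z\vert h))\leq \delta/2$.

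Next I convert this measure bound into a distance bound using $\mu$-local finiteness. Let $p$ be a $1$-projection of $z$ on $h$, so $\pdist_\mu(z,p)\leq \pdist_\mu(z,h)+1$. Decompose $\ww(z\vert p)=\ww(z\vert h)\sqcup\mathcal{F}$, where $\mathcal{F}$ consists of walls $w=\{k,k^c\}$ with $z\in k$, $p\in k^c$ and $k\cap h\neq\emptyset$; for each such $w$ pick $q_w\in k\cap h$. Applying $\delta$-tripodal to the triple $(z,p,q_w)$ produces a point $m_w$ which is $\delta$-between each pair. Combining the three resulting $\delta$-between inequalities with the $1$-projection inequality $\pdist_\mu(z,p)\leq\pdist_\mu(z,q_w)+1$ yields, after careful manipulation, a uniform bound $\pdist_\mu(p,q_w)\leq C$ depending only on $\delta$. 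Each $w\in\mathcal{F}$ then separates the two points $p,q_w\in B(p,C)$, so by $\mu$-local finiteness $\mu(\mathcal{F})\leq f(C)$. Combining, $\pdist_\mu(z,p)\leq\delta/2+f(C)$, so setting $D:=\delta/2+f(C)+1$ verifies \eqref{(3)} and Theorem \ref{thm:equiv} yields the conclusion.

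The main obstacle is the estimate $\pdist_\mu(p,q_w)\leq C(\delta)$, which must be derived purely from the $\delta$-tripodal structure without the $D$-neighborhood hypothesis of \eqref{(3)} being available as an input. The calculation is in the spirit of the argument for $\eqref{(3)}\Rightarrow\eqref{convex}$ in the theorem, but carried out directly with the $\delta$-median $m_w$ playing the role that the $D$-neighborhood bound played there, using that $m_w$ is $\delta$-close to $\{z,p\}$, $\{z,q_w\}$, and $\{p,q_w\}$ simultaneously and that $p$ is an approximate minimizer of $\pdist_\mu(z,\cdot)$ on $h$.
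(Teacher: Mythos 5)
Your opening computation is correct, and it is in fact all the justification the paper itself offers: the paper's proof of this corollary is the single assertion that a $\delta$-tripodal space with measured walls satisfies condition \eqref{(3)} of Theorem \ref{thm:equiv}, and your identity $\pdist_\mu(x,z)+\pdist_\mu(z,y)-\pdist_\mu(x,y)=2\mu(C)$, giving $\mu(\ww(z\vert h))\le\delta/2$, is precisely the quantitative content behind that assertion (it is the computation of the paper's remark on convexity of half-spaces). So you have correctly located the crux: condition \eqref{(3)} asks for a bound on the \emph{distance} $\pdist_\mu(z,h)$, while $\delta$-betweenness only bounds the \emph{measure} of the walls separating $z$ from $h$, and these two quantities are not comparable in general.

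The proposal does not close this step, and the route you sketch cannot as it stands. The estimate $\pdist_\mu(p,q_w)\le C(\delta)$ (equivalently, that each wall of $\mathcal F$ cuts a ball of bounded radius about $p$) is exactly what, in the paper's proof of $\eqref{(3)}\Rightarrow\eqref{convex}$, is extracted from the $D$-neighbourhood clause of \eqref{(3)}: one needs $m_w$ to lie near the half-space containing $p,q_w$ to get the lower bound $\pdist_\mu(z,m_w)\ge\pdist_\mu(z,h)-D$, and near the half-space containing $z,q_w$ to transfer the resulting bound on $\pdist_\mu(p,m_w)$ to that half-space. The three $\delta$-betweenness inequalities for $m_w$ together with the $1$-projection inequality give neither: nothing prevents $m_w$ from sitting essentially at $z$, far from both half-spaces, so the ``careful manipulation'' you defer is exactly the statement being proved, and the argument is circular at its key point. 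Moreover the needed bound does not follow from the stated hypotheses alone: take a point $z$ and points $p_n,q_n$, with walls $\{p_n\}$ and $\{q_n\}$ against their complements of measure $2^n$, and for each $n$ an additional wall $\{p_n,q_n\}$ against its complement of measure zero. The wall pseudo-metric is an infinite spider with hub $z$, hence $0$-tripodal and $\mu$-locally finite; $z$ is $0$-between $p_n,q_n\in h_n:=\{p_n,q_n\}$ and $p_n$ is an exact projection of $z$ on $h_n$, yet the wall $\{p_n\}$ versus its complement lies in $\ww(z\vert p_n)\setminus\ww(z\vert h_n)$, the only available $q_w$ is $q_n$ at distance $2^{n+1}$ from $p_n$, and this wall cuts no ball about $p_n$ of radius below $2^n$; also $\pdist_\mu(z,h_n)=2^n$, so condition \eqref{(3)} itself fails while the corollary's conclusion happens to hold (the Hausdorff distance to the associated median space is zero, the extra wall having measure zero). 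So any complete proof must bring in further input about the half-space structure (for instance handling such measure-zero ``parasitic'' half-spaces, or arguing directly with admissible sections as in $\eqref{convex}\Rightarrow\eqref{Haus}$); neither your proposal nor a literal reading of the one-line appeal to \eqref{(3)} supplies it.
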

 \begin{cor}\label{cor:hypreal}
A finite product of finite dimensional real hyperbolic spaces is within bounded distance from its associated median space.
\end{cor}
\begin{rem}\label{infdim} The infinite dimensional real hyperbolic space can be described in several ways (see Example \ref{exmp:realhyper} for the finite dimensional case). For instance, given the Hilbert space $\ell^2$ and $O(1,\infty )$ the space of bounded operators preserving the form $-x_1^2 +\sum_{i=2}^\infty x_i^2$, one can define $\field H^\infty$ as the infinite dimensional Riemannian symmetric space of constant negative curvature $O(1,\infty )/ [O(1) \times O(\infty )]$, where $O(\infty )$ represents the space of orthogonal operators that keep the first coordinate $x_1$ fixed \cite[$\S 6.A.III$]{Gro}. Alternatively, given a Hilbert space $H$ and a $1$-dimensional subspace $L$ in it, one can define a bilinear symmetric form determined by the quadratic form  
 $$
 B(v,v)= \|v_L\|^2 - \|v_{L^\perp}\|^2, 
 $$ where $v_L$ and $v_{L^\perp}$ are the orthogonal projections of $v$ on $L$, respectively on its orthogonal $L^\perp$. For a fixed non-zero vector $e$ in $L$ one can define the hyperboloid model of the infinite dimensional hyperbolic space as
 $$
 {\field H}^\infty = \{ v\in H \mid B(v,v)=1, B(v,e) >0 \},
 $$ with distance defined by $\cosh \left(\dist_{{\field H}^\infty} (x,y) \right) = B(x,y)$, turning it into a complete metric space with constant curvature $-1$ (in an appropriate sense) \cite{MonodPy14}. Note that there is a generalization of this construction, $\field H^\alpha$, of Hilbert dimension $\alpha$, for every cardinal $\alpha \geq 2$ \cite{BIM}. For simplicity, here we restrict to $\alpha = \aleph_0$.
 
 Finally, ${\field H}^\infty$ can also be defined as the direct limit of the sequence of metric spaces $\left({\field H}^n \right)_{n\in \N }$, where ${\field H}^n\hookrightarrow {\field H}^{n+1}$ is the natural inclusion obtained by adding a 0 direction.
 

It is unclear how to endow the space $X= \field H ^\infty$ with a measured walls structure, and even if it were, only a few of the previous arguments would work. In particular, the following are true:
\begin{enumerate}
    \item for every $\lambda >0$, the coarse interval $I_\lambda (x,y)$ is contained in $\nn_{\lambda +\delta } (I(x,y))$, for every $x,y$, where $\delta $ only depends on the hyperbolicity constant. Therefore, all the half-spaces are quasi-convex in the most general sense.
    \item the half-spaces are quasi-gated only in the following sense: given a half-space $h$, a point $x$ outside $h$ and an $\epsilon$-projection $p$ of $x$ onto $h$, every wall in $\ww(x|p) \setminus \ww (x|h)$ intersects a ball $B(p, \delta )$, with $\delta $ depending only on the hyperbolicity constant and on $\epsilon$ (Lemma \ref{lem:convex}). 
    \end{enumerate}
However, as $X$ is not measurably locally finite, the rest of the argument fails. It is nevertheless natural to ask whether the result cannot be obtained by different means. An answer to Question \ref{ques:infdim} cannot be obtained by taking a limit of the finite dimensional case either, as the bound we obtain for the distance to the associated median space in that case depends on volumes of balls, that increase with the dimension.
\end{rem}%
\begin{rem}\label{rem:mwallm(X)}
To complete the picture, we note that for a measured wall space $X$ satisfying the equivalent assumptions of Theorem \ref{thm:equiv}, a connection can be established between its half-spaces $h$ and corresponding sets $h_\mm$ of $\mm (X)$, that are not half-spaces in general, defined, for each $h\in\hh$, by
$$h_\mm=\{[\sigma]\in\mm(X)\hbox{ such that }h\in\sigma\hbox{ for some }\sigma\in[\sigma]\}.$$
These sets are convex, but their complements in $\mm(X)$ might not be. Note that the complement of a set $h_\mm$ is in general not $(h^c)_\mm$, since we can have ${h_\mm}\cap(h^c)_\mm\not=\emptyset$, see Remark 2.18 in \cite{F2}.
\end{rem}
%

%
%

\section[Local compactness of medianization of real hyperbolic spaces]{Local compactness of the medianization of real hyperbolic spaces.}\label{RhypLocCpct}
In this section, we will show that the medianization of a real hyperbolic space is locally compact. To do so, we will show that balls in that medianization are {\it totally bounded}, namely they can be covered by finitely many balls of any given radius. In a metric space, a subset is compact if and only if it is complete and totally bounded. More precisely, the aim of this section is to show the following.
\begin{prop}\label{pro:totallybded} Let $n\geq 1$. For any $R\geq 0$ and any $\varepsilon>0$, any ball of radius $R$ in $\mm({\mathbb H}^n)$ can be covered by a finite collection of balls of radius $\varepsilon$.\end{prop}
We start by identifying the walls in ${\mathbb H}^n$.
\begin{notation}\label{def:wallsHn}
Fix a basepoint $x_0$ in ${\mathbb H}^n$. For every point $x\neq x_0$, we denote by $H_x$ the unique hyperplane orthogonal to the unique geodesic $[x, x_0]$ and containing $x$, and by $W_x$ the unique wall determined by $H_x$ and whose closed component contains $x_0$. 
\end{notation}
\begin{lem}\label{lem:theta}Let $n\geq 2$. For every admissible section $\tau\in\mm(\mathbb{H}^n)$ and non-trivial geodesic $[a,x_0]$ in ${\mathbb H}^n$, one of the following cases occurs:
\begin{enumerate}
\item\label{c1} the exists $\theta \in (a,x_0)$ such that $\tau $ coincides with $\sigma_a$ on every $W_x$ with $x\in (\theta , x_0]$ and $\tau $ coincides with $\sigma_{x_0}$ on every $W_x$ with $x\in [a, \theta )$; 
\item\label{c2} $\tau $ coincides with $\sigma_a$ on every $W_x$ with $x\in (a ,x_0 )$;
\item\label{c3} $\tau $ coincides with $\sigma_{x_0}$ on every $W_x$ with $x\in (a ,x_0 )$.
\end{enumerate}
\end{lem} 
\proof Suppose that we are neither in case \eqref{c2} nor in case \eqref{c3}. Thus, there exists a point $u\in (a ,x_0 )$ such that $\tau $ coincides with $\sigma_a$ on $W_u$ and a point $v\in (a ,x_0 )$ such that the section $\tau$ coincides with $\sigma_{x_0}$ on $W_v$. Clearly for every $x\in [u, x_0]$, the section $\tau$ coincides with $\sigma_a$ on $W_x$, and for every $y\in [a, v]$, it coincides with $\sigma_{x_0}$ on $W_y$. In particular $u$ must be between $v$ and $x_0$. Let $\delta$ be the supremum of the distances to $x_0$ of points such as $u$ and let $\theta$ be the point on $[a, x_0]$ that is at distance $\delta $ from $x_0$. As $\theta $ appears as limit of a sequence of points $u_n$ as above and $(\theta , x_0] = \bigcup [u_n , x_0 ]$, the first property in  \eqref{c1} follows, and the second property follows according to the choice  of $\theta $ and the fact that for every $x\in (a,x_0)$, then $\tau $ must equal either  $\sigma_{a}$ or  $\sigma_{x_0}$.    \endproof 
\begin{notation}\label{not:switchmap}
We denote by $\theta_\tau (a)$ the point $\theta $ in case \eqref{c1}, the endpoint $a$ in case \eqref{c2} and the endpoint $x_0$ in case \eqref{c3}. Note that when $a$ varies on the sphere $S(x_0, R)$, the parameter $\theta_\tau (a)$ defines a map 
\begin{eqnarray*}\delta_\tau : S(x_0, R) &\to& [0,R]\\
a & \mapsto& \dist (x_0, \theta_\tau (a)).\end{eqnarray*}
\end{notation}
\begin{lem}\label{lem:cont}
For every admissible section $\tau\in\mm(\mathbb{H}^n)$, the map $\delta_\tau$ defined above is continuous. 
\end{lem}
\proof Let $a_0$ be a fixed point on $S(x_0, R)$ and let $a$ be a point on $S(x_0, R)$ such that the angle between $[x_0, a]$ and $[x_0, a_0]$ is at most $\epsilon$. For every point $y_0$ on $[a_0, x_0]$, the point $y$ on $[a, x_0]$ that is nearest to $a$ such that $H_y$ does not intersect $H_{y_0}$ satisfies 
$$
\dist(x_0, y) \leq \dist(x_0, y_0) + \kappa \epsilon R,
$$ for a universal constant $\kappa $. 
This implies that $\dist(x_0, \theta_\tau (a))\leq \dist(x_0, \theta_\tau (a_0))+ \kappa \epsilon R.$ The opposite inequality is obtained by swapping $a_0$ and $a$ in the previous argument.
\endproof
\begin{rem}\label{rem:ProjClose}
    According to Lemmas \ref{lem:convex} and \ref{lem:smball}, given $\tau$ an arbitrary admissible section in $\mm(\mathbb{H}^n)$, and $x\in \mathbb{H}^n$ a point such that
$\pdist_{\mu}(\tau ,\sigma_x)\le \pdist_{\mu} (\tau ,\iota (\mathbb{H}^n))+1$, any wall that separates $x$ from $\tau$ also cuts the ball $B(x,\rho)$, where $\rho$ only depends on the hyperbolicity constant and the dimension of $\mathbb{H}^n$. This implies that (in the Notation \ref{Nwallssubsets} where $\hh(Y)$ is the set of half-spaces cutting $Y$)
$$
\tau = \left[ \sigma_x \cap \hh (B(x, \rho ))^c\right] \sqcup \left[\tau \cap \hh (B(x, \rho )) \right],
$$
meaning that $\tau$ and $\sigma_x$, as admissible sections, coincide on the walls that do not intersect $B(x, \rho )$ but may differ on the walls intersecting that ball, see Figure \ref{fig:ProjClose}. In particular, 
$$\dist_{\mm(X)} (\tau , \sigma_x )\leq\mu(\hh (B(x, \rho ))):=\Delta_\rho.$$
\begin{figure}
    \centering
    \includegraphics[width=0.5\textwidth]{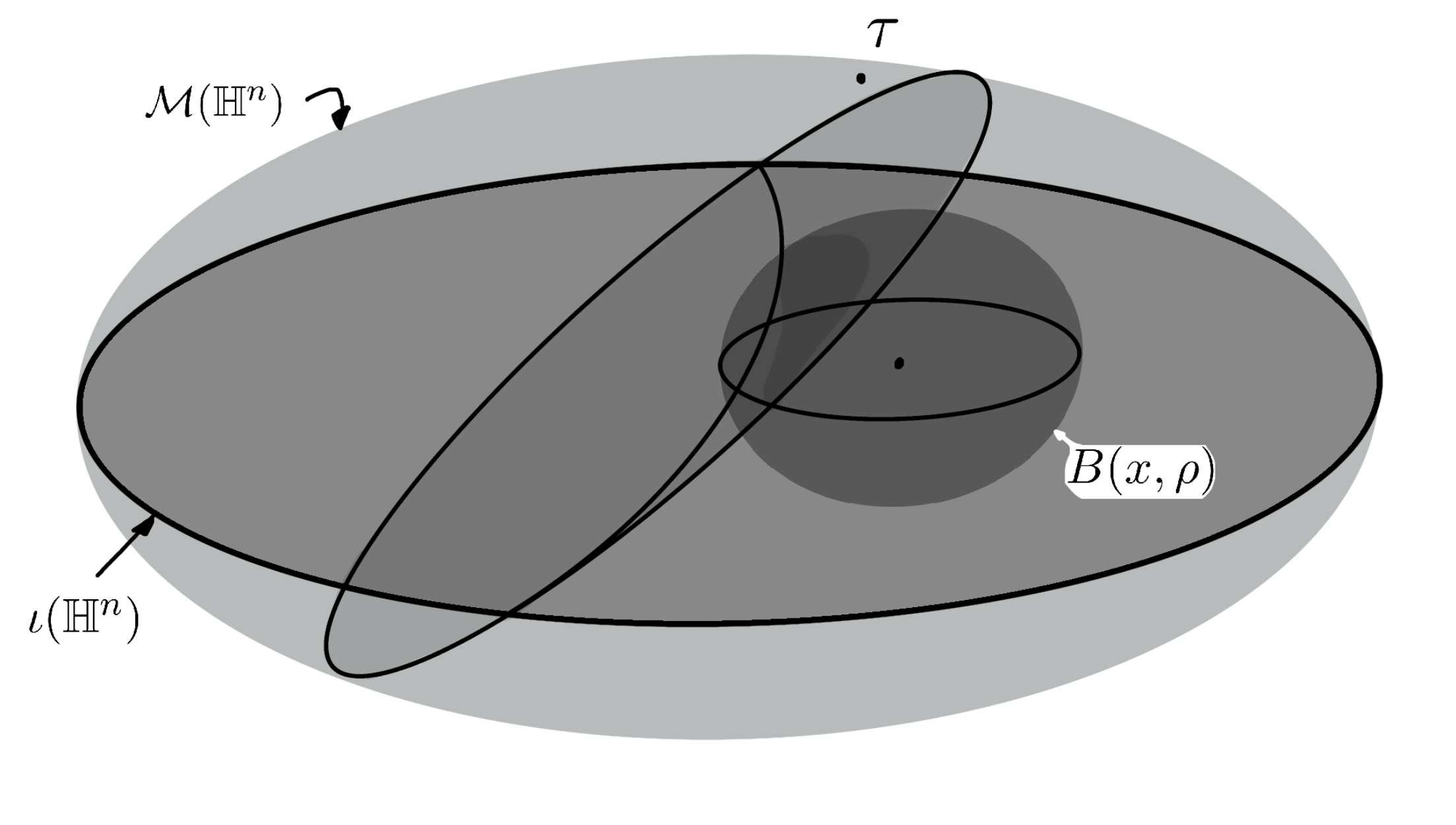}
    \caption{A wall separating $\tau$ from a 1-projection $x$ in $\iota (\mathbb{H}^n)$ also cuts the ball $B(x,\rho)$}
    \label{fig:ProjClose}
\end{figure}
\end{rem}
We now have everything in place for the proof of Proposition \ref{pro:totallybded}.
\begin{proof}[Proof of Proposition \ref{pro:totallybded}.]
According to Corollary \ref{cor:hypreal}, the median space $\mm({\mathbb H}^n)$ is at finite Hausdorff distance from ${\mathbb H}^n$, so it suffices to consider balls centered at some $\sigma_{x_0}$, for $x_0\in{\mathbb H}^n$ fixed. Let $\tau$ be an arbitrary admissible section in this ball $B(\sigma_{x_0},R)$, any 1-projection in $\iota({\mathbb H}^n)\subseteq \mm({\mathbb H}^n)$ of that given $\tau$ is within distance $R+\Delta_\rho$ from $x_0$. In particular, for $R'= R+ \Delta_\rho + \rho$ we can write (see Remark \ref{rem:ProjClose} above)
$$
\tau = \left[ \sigma_{x_0} \cap \hh (B(x_0, R' ))^c\right] \sqcup \left[\tau \cap \hh (B(x_0 , R' )) \right]. 
$$
meaning that $\tau$ coincides with $\sigma_{x_0}$ on walls that are far enough from $x_0$. 

To show that the ball of radius $R$ is totally bounded, it suffices to find, for every $\epsilon >0$, a finite number of admissible sections $\tau_1,\dots , \tau_m$ in the closed ball of radius $R$ around $\sigma_{x_0}$ such that for every admissible section $\tau$ at distance at most $R$ from $\sigma_{x_0}$, there exists some $\tau_i$ such that the symmetric difference 
$$
\tau \vartriangle \tau_i = \left[\tau \cap \hh (B(x_0 , R' )) \right] \vartriangle \left[\tau_i \cap \hh (B(x_0 , R' )) \right] (=\pdist(\tau,\tau_i))
$$ 
has measure at most $C\epsilon$, where $C$ is a constant depending on the hyperbolicity constant and the dimension of ${\mathbb H}^n$. To construct those admissible sections, we consider $a_1, \dots , a_k$ points on the sphere $S(x_0, R')$ such that for every $a\in S(x_0, R')$ there exists an $a_i$ such that the angle between $[x_0, a]$ and $[x_0, a_i]$ is at most $\epsilon/2R'$. For each $i\in \{ 1,2, \dots , k \}$ we consider $b_1(i), \dots, b_n(i)$ on $[x_0, a_i]$ ordered from $x_0$ to $a_i$, dividing it into subgeodesics of equal length $\epsilon$. This process subdivides the ball of radius $R'$ into a grid of mesh $\epsilon$.

On the finite set of walls 
$$W_F=\bigcup_{i=1}^k\bigcup_{j=1}^n\{W_{b_j(i)}\}$$ 
the number of admissible sections is finite. Let  $\tau_1,\dots, \tau_m$ be  admissible sections at distance at most $R$ from $\sigma_{x_0}$ such that all the possible restrictions $\tau|_{W_F}$ of an admissible section at distance at most $R$ from $\sigma_{x_0}$ are realized by some $\tau_\ell$. In particular, for any $\tau$ at distance at most $R$ from $\sigma_{x_0}$ there exists $\tau_\ell$ such that $\tau$ restricted to $W_F$ coincides with $\tau_\ell$. 

We are left to show that the distance from $\tau $ to $\tau_\ell$ in $\mm (X)$ is at most $C \epsilon$. To this end, assume that $\theta_{\tau } (a_r)$ is inbetween $b_j (i)$ and $b_{j+1} (i)$, possibly equal to $b_j (i)$. Then, on the set $W_{b_{j+1} (i)}$, the section  $\tau$ coincides with $\sigma_{x_0}$ and on the set $W_{b_{j}(i)}$, the section $\tau$ coincides with $\sigma_{a}$. Whence the same is true for $\tau_\ell$ and $\theta_{\tau_\ell} (a_r)$ is inbetween $b_j(i)$ and $b_{j+1} (i)$. It follows that the distance between  $\theta_{\tau} (a_r)$ and  $\theta_{\tau_\ell}(a_r)$ is at most $\epsilon$. 

For an arbitrary $a\in S(x_0, R')$, the above and Lemma \ref{lem:cont} imply that the distance between  $\theta_{\tau } (a)$ and  $\theta_{\tau_\ell} (a)$ is at most $(1+\kappa)\epsilon$, for the constant $\kappa$ of Lemma \ref{lem:cont} above.

Thus, for every $a\in  S(x_0, R')$, the sections $\tau$ and $\tau_\ell$ coincide on every $W_x$ with $x\in [a, x_0]$, except maybe on a small sub-interval of length at most $(1+\kappa)\epsilon$. The distance between $\tau$ and $\tau_\ell$, given by the measure of the walls on which they differ, is hence bounded by $C\epsilon$, where $C$ depends on the dimension $n$. 
\end{proof}
Proposition \ref{pro:totallybded} combined with Corollary \ref{cor:hypreal} imply the following. 
 \begin{cor}\label{cor:latticeshypreal}
Let $X$ be a  finite product of finite dimensional real hyperbolic spaces.
Every uniform lattice in ${\mathrm{Isom }} (X)$ acts on the metric completion $\overline{\mm (X)}$ of the median space associated to $X$ properly discontinuously and with compact quotient.
\end{cor}
\begin{rem}
It would be interesting to know if $\mm(X)$ is already complete, but this is not immediate in our opinion. It follows from results of \cite{F1,F4} that irreducible lattices as in Corollary \ref{cor:latticeshypreal} cannot act properly discontinuously cocompactly on a median space of finite rank. This implies that:
\begin{itemize}
\item the median space $\mm (X)$ has infinite rank;
\item the action described in Corollary \ref{cor:latticeshypreal} is the best type of action on a median space that can be found for such lattices. 
\end{itemize}
\end{rem}
%
\section{The complex hyperbolic space.}\label{Chyp}
In the case of complex hyperbolic spaces $\field H^n_\C$, their hyperbolic metric $\dist_{{\mathbb{H}}_{\mathbb{C}}}$ cannot be induced by a wall structure (see the proof of Corollary \ref{HC}, at the end of this section). The square root of this metric however is known to come from a wall structure (possibly also larger powers $\dist_{{\mathbb{H}}_{\mathbb{C}}}^\alpha$ with $\alpha \in (1/2 ,1)$ might, but nothing is proven in this respect). Here we explain that no metric $\dist_{{\mathbb{H}}_{\mathbb{C}}}^\alpha$ can be $\delta$-tripodic, hence the wall space it would be induced by cannot be within bounded Hausdorff distance from a median space. This is explained in Proposition \ref{prop:snowflake} below, which follows from simple considerations about snowflaked metrics. For the sake of completeness, we provide the entire (easy computational) argument here.
\begin{lem}\label{lem:alphabeta}
Let $a\geq b \geq 0$, let $0< \alpha <1$ and $\beta >1$. The following inequalities hold: 
\begin{enumerate}
\item\label{beta} $(a+b)^\beta \geq a^\beta + b^\beta$;
\item\label{alpha}  $ a^\alpha + b^\alpha - (a+ b)^\alpha \geq b^\alpha (2-2^\alpha )\geq 0$. 
\end{enumerate}
\end{lem}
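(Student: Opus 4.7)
The plan is to handle both inequalities by a single monotonicity trick on an auxiliary one-variable function, exploiting the signs of $\beta-1>0$ and $\alpha-1<0$ respectively. Both inequalities are tight at $a=b$ (in fact at $a=b=0$ for (1), and at $a=b$ for (2)), which dictates the right choice of free variable in each case.

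For (1), I would fix $a$ and consider $f(t)=(a+t)^\beta-t^\beta$ on $[0,\infty)$. Since $\beta>1$, the power function $x\mapsto x^{\beta-1}$ is non-decreasing on $[0,\infty)$, so
\[
f'(t)=\beta\bigl[(a+t)^{\beta-1}-t^{\beta-1}\bigr]\ge 0.
\]
Therefore $f(b)\ge f(0)=a^\beta$, which rearranges to $(a+b)^\beta\ge a^\beta+b^\beta$. The hypothesis $a\ge b$ is not even needed here; only $a,b\ge 0$ is used.

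For (2), the trivial bound $b^\alpha(2-2^\alpha)\ge 0$ is immediate from $0<\alpha<1$, which gives $2^\alpha<2$. For the main inequality, I would this time fix $b$ and set $g(t)=t^\alpha+b^\alpha-(t+b)^\alpha$ for $t\ge b$. Since $0<\alpha<1$, the power $x\mapsto x^{\alpha-1}$ is non-increasing, and $t\le t+b$ forces $t^{\alpha-1}\ge (t+b)^{\alpha-1}$, so
\[
g'(t)=\alpha\bigl[t^{\alpha-1}-(t+b)^{\alpha-1}\bigr]\ge 0.
\]
Hence for $a\ge b$ we get $g(a)\ge g(b)=2b^\alpha-(2b)^\alpha=b^\alpha(2-2^\alpha)$, which is exactly the claimed bound. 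It is crucial here that we vary $a$ while keeping $b$ fixed and start the evaluation at $a=b$: that is the configuration in which the inequality is sharp, and also explains why the hypothesis $a\ge b$ cannot be dropped.

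There is no genuine obstacle: the statement is a calculus lemma, and the only point worth flagging is the correct choice of free variable in each part (dictated by the location of equality). Boundary cases $a=0$ or $b=0$ reduce to trivial identities and need no special treatment.
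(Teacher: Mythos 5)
Your proof is correct and essentially the same as the paper's: the paper normalizes $b=1$ and proves monotonicity of the very same auxiliary functions $(x+1)^\beta-x^\beta-1$ and $x^\alpha+1-(x+1)^\alpha$, evaluating at $x=1$. Your only (harmless) variation is skipping the normalization and, in part (1), starting the monotonicity at $b=0$ rather than at $a=b$, which also makes explicit that $a\geq b$ is not needed there.
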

\proof The inequalities are trivial when $b=0$. Thus we may assume that $b>0$, in fact  without loss of generality we may assume that $b=1$ and $a\geq 1$. 

\eqref{beta} follows from the fact that the function $f(x)= (x+1)^\beta -x^\beta -1$ is increasing for $x\geq 1$ and $f(1)= 2^\beta -2 >0$.

\eqref{alpha} follows from the fact that the function $g(x)= x^\alpha +1 -(x+1)^\alpha$ is increasing for $x\geq 1$ and $g(1)= 2- 2^\alpha$.
    
\endproof
\begin{prop}[Snowflaked metric spaces]\label{prop:snowflake}
Let $(X, \dist)$ be a metric space and let $0<\alpha < 1$. This implies that $\dist^\alpha$ is a metric, and the following holds. 
\begin{enumerate}
\item\label{interval} For every two points $x,y$ in $X$, the interval with respect to the metric $\dist^\alpha$ reduces to $\{ x,y\}$.
\item\label{snow} If $(X, \dist^\alpha )$ is $\delta$--tripodic, for some constant $\delta >0$, then $X$ has bounded diameter. 
\end{enumerate}
\end{prop}
\proof That $\dist^\alpha$ is a metric follows from Lemma \ref{lem:alphabeta}.

\eqref{interval} We denote the distance $\mbd=\dist^\alpha$, and set $\beta=\frac{1}{\alpha }$. We thus have that $\dist = \mbd^\beta$.

Let $z$ be a point between two points $x,y$ in $X$, with respect to the metric $\mbd$. Thus $\mbd (x,y) = \mbd (x,y) + \mbd (y,z)$, whence
\begin{equation}\label{eq:snow}
\dist (x,y) = \left[ \mbd (x,z) + \mbd (z,y) \right]^\beta \geq \dist (x,z) + \dist (z,y) \geq \dist (x,y).
\end{equation}
The first inequality in \eqref{eq:snow} follows from Lemma \ref{lem:alphabeta}\eqref{beta}. Note that equality holds if and only if one of the two numbers is zero. Since the first and the last terms in \eqref{eq:snow} are equal, all inequalities become equalities, in particular from the above we can deduce that either $\mbd (x,z) =0$ or $\mbd (z,y) =0$.   

\medskip

\eqref{snow} The proof is a coarse version of the proof of \eqref{interval}. Take three points $x,y,z\in X$ and  consider a $\delta$--median point $m$ with respect to the metric $\mbd$. Without loss of generality we can assume that $\dist (m,x)\leq \dist (m,y)\leq \dist (m,z)$. We can write
\begin{equation}\label{eq:alpha}
\mbd (x,y) + \delta \geq \mbd (x,m)+ \mbd (m,y)\geq \left(\dist (x,m) + \dist (m,y) \right)^\alpha \geq \dist (x,y)^\alpha = \mbd (x,y). 
\end{equation}
The second inequality is Lemma \ref{lem:alphabeta} \eqref{alpha}. Using  \eqref{eq:alpha} and again Lemma \ref{lem:alphabeta}\eqref{alpha}, we can write that
\begin{equation}\label{eq:alpha2}
\delta \geq \dist (x,m)^\alpha + \dist (m,y)^\alpha - \left[\dist (x,m) + \dist (m,y) \right]^\alpha \geq (2-2^\alpha ) \dist (m,x). 
\end{equation} 
If we repeat the arguments in \eqref{eq:alpha} and \eqref{eq:alpha2}, with $x,y$ replaced by $y,z$, we obtain that
$$
(2-2^\alpha ) \dist (m,y) \leq \delta ,
$$ whence 
$$
\dist (x,y)\leq \frac{2\delta }{2-2^\alpha }.
$$
This shows that only close enough triples of points admit a $\delta$--median point, and hence $(X,\mbd)$ is $\delta$-tripodic only if it has bounded diameter, that is only when $(X,\dist)$ has bounded diameter.
\endproof

\begin{cor}\label{cor:snow}
Let $(X, \dist )$ be an unbounded geodesic Gromov hyperbolic metric space, and let $0<\alpha < 1$. Then $(X, \dist^\alpha )$ cannot be $\delta$--tripodic for any $\delta >0$.

In particular if the snowflaked metric $\dist^\alpha $ is induced by a measured walls structure, then no isometric embedding of $(X, \dist^\alpha )$ into a median metric space $\mm$ has image within bounded distance of $\mm$.\end{cor}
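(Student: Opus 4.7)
The plan is to apply Proposition \ref{prop:snowflake}\eqref{snow} to prove the first assertion, for which I need to exhibit triples of points in $X$ whose pairwise $\dist$-distances tend to infinity. The ``in particular'' clause will then follow from the first assertion via the median-point argument used in the proof of \eqref{HausGen}$\Rightarrow$\eqref{(3)} in Theorem \ref{thm:equiv}. A preliminary observation: the conclusion of Proposition \ref{prop:snowflake}\eqref{snow} can be strengthened from ``not $\delta$-median'' to ``not $\delta$-tripodal'' without any change in its proof, since that proof only invokes the existence of a single point $\delta$-between each pair of the given triple, which is precisely the tripodal condition.

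For the triples, fix a basepoint $o\in X$ and three distinct boundary points $\xi_1,\xi_2,\xi_3\in\partial X$. Let $\gamma_i\colon [0,\infty)\to X$ be geodesic rays with $\gamma_i(0)=o$ converging to $\xi_i$, and set $x_n:=\gamma_1(n)$, $y_n:=\gamma_2(n)$, $z_n:=\gamma_3(n)$. A standard consequence of Gromov hyperbolicity is that the Gromov products $(x_n|y_n)_o$, $(y_n|z_n)_o$, $(x_n|z_n)_o$ admit a uniform upper bound $C$, because the $\xi_i$'s are pairwise distinct on $\partial X$. Hence $\dist(x_n,y_n)=2n-2(x_n|y_n)_o\ge 2n-2C$, and analogously for the other two pairs, so all three pairwise distances diverge. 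Proposition \ref{prop:snowflake}\eqref{snow}, with the strengthening noted above, then yields the first assertion.

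For the ``in particular'' statement, suppose for contradiction that there is an isometric embedding $\varphi\colon(X,\dist^\alpha)\to(\mm,\dist_\mm)$ into a median metric space with $\mm=\nn_D(\varphi(X))$ for some $D>0$. Given any triple $x,y,z\in X$, let $m\in\mm$ be the median of $\varphi(x),\varphi(y),\varphi(z)$ and choose $m'\in X$ with $\dist_\mm(m,\varphi(m'))\le D$; the same short unfolding of the median identities used in \eqref{HausGen}$\Rightarrow$\eqref{(3)} of Theorem \ref{thm:equiv} shows that $m'$ is $2D$-between each of the three pairs in $\{x,y,z\}$ (with respect to $\dist^\alpha$). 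Thus $(X,\dist^\alpha)$ is $2D$-tripodal, contradicting the first assertion. The only obstacle worth naming is the observation that Proposition \ref{prop:snowflake}\eqref{snow} actually rules out $\delta$-tripodal (not just $\delta$-median); the remaining arguments are routine applications of Gromov-hyperbolic estimates and of the definitions.
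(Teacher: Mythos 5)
Your proof is correct and follows the paper's (implicit) route: produce triples with pairwise $\dist$-distances diverging from three distinct boundary points, invoke Proposition \ref{prop:snowflake}\eqref{snow} --- whose proof indeed only uses the existence of a $\delta$-between point for each pair, so your strengthening from ``not $\delta$-median'' to ``not $\delta$-tripodal'' is exactly the observation needed --- and then derive the ``in particular'' clause from the same $2D$-tripodality argument as in \eqref{HausGen}$\Rightarrow$\eqref{(3)} of Theorem \ref{thm:equiv}. One minor caveat: since $X$ is not assumed proper, a Gromov boundary point need not be the endpoint of a geodesic ray issuing from $o$, so it is safer to take sequences $x_n\to\xi_1$, $y_n\to\xi_2$, $z_n\to\xi_3$ with $\dist(o,x_n),\dist(o,y_n),\dist(o,z_n)\to\infty$ and use that the Gromov products $(x_n|y_n)_o$, etc., stay uniformly bounded because the limit points are distinct; this yields the same divergence of pairwise distances without needing rays.
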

Consider now the hyperbolic space $\field H^n_\C$ with $n\geq 2$, endowed with the hyperbolic distance $\dist$. Recall that $\field H^n_\C$ admits a structure of space with measured walls such that the induced distance is $\dist^{1/2}$. Possibly the exponent $1/2$ can be increased to some other value $\alpha \in (1/2,1)$. In either case, the space with measured walls thus obtained cannot be within finite distance of a median space due to Proposition \ref{prop:snowflake}. Thus we can finish the proof of Corollary \ref{HC} stated in the introduction.  
\begin{proof}[Proof of Corollary \ref{HC}] \eqref{nowalls}\quad An isometric embedding of any metric space $Y$ in a space with walls $X$ gives a wall structure on $Y$ (by restricting the walls of $X$ to the isometrically embedded subspace $Y$). But a wall on a manifold gives a codimension one totally geodesic submanifold as the intersection of the topological closure of each side. However, according to \cite[$\S  3.1.11$]{Gol}, every totally geodesic submanifold of $\field H^n_\C$ is either totally real or complex-linear, and in particular has real-codimension at least 2 when $n\geq 2$. See also Remark 2.11 of \cite{PSZ}.

\medskip

\eqref{complexsnow} is an immediate consequence of Corollary \ref{cor:snow}.\end{proof}



\newcommand{\etalchar}[1]{$^{#1}$}
\providecommand{\bysame}{\leavevmode\hbox to3em{\hrulefill}\thinspace}
\providecommand{\MR}{\relax\ifhmode\unskip\space\fi MR }
\providecommand{\MRhref}[2]{%
  \href{http://www.ams.org/mathscinet-getitem?mr=#1}{#2}
} \providecommand{\href}[2]{#2}

\end{document}